\newtheorem{theorem}{Theorem}
\newtheorem{lemma}{Lemma}
\newcommand{\ave}[1]{ \left\{\!\!\left\{ {#1} \right\}\!\!\right\} }
\newcommand{\jump}[1]{ \left[ \! \left[ {#1} \right] \! \right] }
\newcommand{\triple}[1]{ |\!|\!|{#1}|\!|\!| }
\title{A boundary-field formulation for elastodynamic scattering\\
}
\author{%
George C. Hsiao$^{1}$ \ and \
Tonatiuh S\'anchez-Vizuet$^{2*}$ \ and \
Wolfgang L. Wendland$^{3}$ }
\begin{document}
\date{}
\maketitle
\vspace{-0.5cm}
\begin{center}
{\footnotesize 
$^1$ University of Delaware \qquad
$^2$ The University of Arizona \qquad
$^3$ University of Stuttgart \\
E-mails: ghsiao@udel.edu / tonatiuh@math.arizona.edu / wolfgang.wendland@mathematik.uni-stuttgart.de \\
*Corresponding author\\
}
\end{center}

\bigskip
\noindent
{\small{\bf Abstract.}
 An incoming elastodynamic wave impinges on an elastic obstacle is embedded in an infinite elastic medium. The objective of the paper is to examine the subsequent elastic fields scattered by and transmitted into the elastic obstacle. By applying a boundary-field equation method, we are able to formulate a nonlocal boundary problem (NBP) in the Laplace transformed domain, using the field equations inside the obstacle and boundary integral equations in the exterior elastic medium. Existence, uniqueness and stability of the solutions to the NBP are established in Sobolev spaces for two different integral representations. The corresponding results in the time domain are obtained. The stability bounds are translated into time domain estimates that can serve as the starting point for a numerical discretization based on Convolution Quadrature.          
}

\medskip
\noindent
{\small{\bf Keywords}{:} 
Transient wave scattering, elastodynamics, time-dependent boundary integral equations, convolution quadrature.
}

\noindent
{\bf Mathematics Subject Classifications (2010)}: 35A15, 35L05, 45A05, 74J05, 74J20.

\section{Introduction}\label{sec:Intro}

Scattering problems for the elastodynamic equations have attracted considerable interest over the years. However, most of the previous work has been based on time-harmonic formulations
(see, e.g., \cite{AhHs:1975,BaVa:1976, DaKl:2000, DaRi:1993, Ku:1963, kupradze,  RiShRe:1985}, and the recent monograph \cite{TC:2020}). Considerably fewer efforts have been carried out for transient wave propagation among which analytical studies were carried out in \cite{CrRi:1968, KiSc:2008,TC:2020}, while a computational method in two dimensions based on convolution quadrature was proposed in \cite{DoSaSa2015}.   

In the articles \cite{HsSaSa:2016, HsSaWe:2015}, we were able to show for the first time that the scattering problem for fluid/solid interaction can be formulated as a non-local boundary value problem (NBP) in the Laplace transformed domain and treated numerically with the coupling of boundary elements and finite elements in space and convolution quadrature in time \cite{Lubich:1988a, Lubich:1988b, Lubich:1994}. Moreover, on the theoretical side, we showed that based on the estimates in the Laplace domain, the corresponding solutions and estimates in the time domain can be easily obtained. Subsequently this approach has been adapted successfully in various settings involving acoustic wave propagation \cite{BrSaSa:2016,HsSV:2021, HsWe:2021a}. In this paper, following Hsiao, Sayas and Weinacht \cite{HsSaWe:2015}, we apply the time-dependent boundary-field formulation to the elastodynamic  scattering and obtain an NBP in the Laplace transformed domain.  In particular, we are able to prove the existence of variational  solution of the NBP directly as in Hsiao and Wendland \cite{HsWe:2021a} (see also \cite{BLS:2015,HSVS:2019}). Moreover, following the techniques laid out by Laliena and Sayas \cite{LaSa:2009} for treating acoustic scattering problems, we are also able to establish the variational solution of NBP indirectly. 

It is worthy mentioning that our approach based on the formulation of the NBP in the Laplace transformed domain is originally motivated from the work of  
Lubich for solving time-dependent boundary integral equations of convolution type, and has been advanced by Sayas and his coworkers (see, for instance, \cite{LaSa:2009}, the monograph \cite{Sayas:2016} and references therein).  The latter has become particularly popular in recent years for treating time-dependent boundary integral equations of convolution type, which is now  known as the convolution quadrature (CQ) method. An essential feature of this CQ method is that it works directly on the data in the time domain rather than in the transformed domain, and does not depend upon the explicit construction of the time-dependent fundamental solutions of time-dependent partial differential equations under consideration. These features make this approach particularly well suited for numerical computations; the analytic results proven in Section \ref{sec:TimeDomain} can be used to establish the minimum regularity required from problem data, as well as the convergence properties and the growth of the error in time associated to a semi-discretization based on Convolution Quadrature.

The paper is organized as follows: The governing equations for elastic wave scattering are presented in Section \ref{sec:formulation} in both the time and the Laplace domains. Section \ref{sec:BIF} is devoted to the formulation of two different coupled integro-differential systems for the problem that transform the unbounded exterior problem into a non-local (but bounded) problem posed on the interface of the scatterer. The required concepts from the theory of boundary integral equations as well as from Sobolev space theory are briefly presented in this section as well. The core of the analysis is contained in Section \ref{sec:LaplaceDomainResults}, where the variational formulations for the problems introduced in the previous section are shown to be well posed. Special care is taken to keep track of the dependence of the stability and ellipticity bounds with respect to the Laplace parameter, as these bounds contain the information required for their time-domain counterparts. The main results in the time domain follow then easily from the previous analysis and are established in the final Section \ref{sec:TimeDomain}.

\textbf{A remark on notation:} Throughout the paper we will transit between the time domain (where our problem of interest is posed) to the Laplace domain (where we will carry out the analysis) and back to the time domain (where we will finally establish existence, uniqueness regularity requirements and growth of the solution). In doing so and to avoid confusion, we will stick to the convention that functions in the time domain will be denoted by lower case letters, while functions in the Laplace domain will be denoted by upper case letters. Moreover, functions defined in the volume will be denoted by Latin letters, while we will use Greek letters for functions defined in the boundary. The table below can be used as a reference to navigate the notation.

\begin{center}
\begin{tabular}{|c|c|c|}
\cline{2-3}
\multicolumn{1}{c|}{} & Volume & Boundary \\
\hline
Time domain & $\mathbf u^-,\mathbf u^+, \mathbf u^{inc}, \mathbf f$ & $\boldsymbol \lambda, \boldsymbol\varphi$\\
\hline
Laplace domain & $\mathbf U^-,\mathbf U^+, \mathbf U^{inc}, \mathbf F$ & $\boldsymbol \Lambda, \boldsymbol\varPhi$\\
\hline
\end{tabular}
\end{center}

\section{Formulation of the problem}\label{sec:formulation}
\subsection{Governing equations in the time domain}
Let $\Omega^-$ be a connected (although not necessarily \textit{simply} connected), bounded domain in $\mathbb R^3$ with Lipschitz boundary $\Gamma$; we will denote the complementary unbounded region by $\Omega^{+} := \mathbb R^{3}\setminus\overline{\Omega^-}$ and the exterior unit normal vector pointing in the direction of $\Omega^+$ by $\boldsymbol n$. A schematic of the problem geometry is shown in Figure \ref{fig:geometry}. 
\begin{figure}[h]
\center
\includegraphics[width=0.5\linewidth]{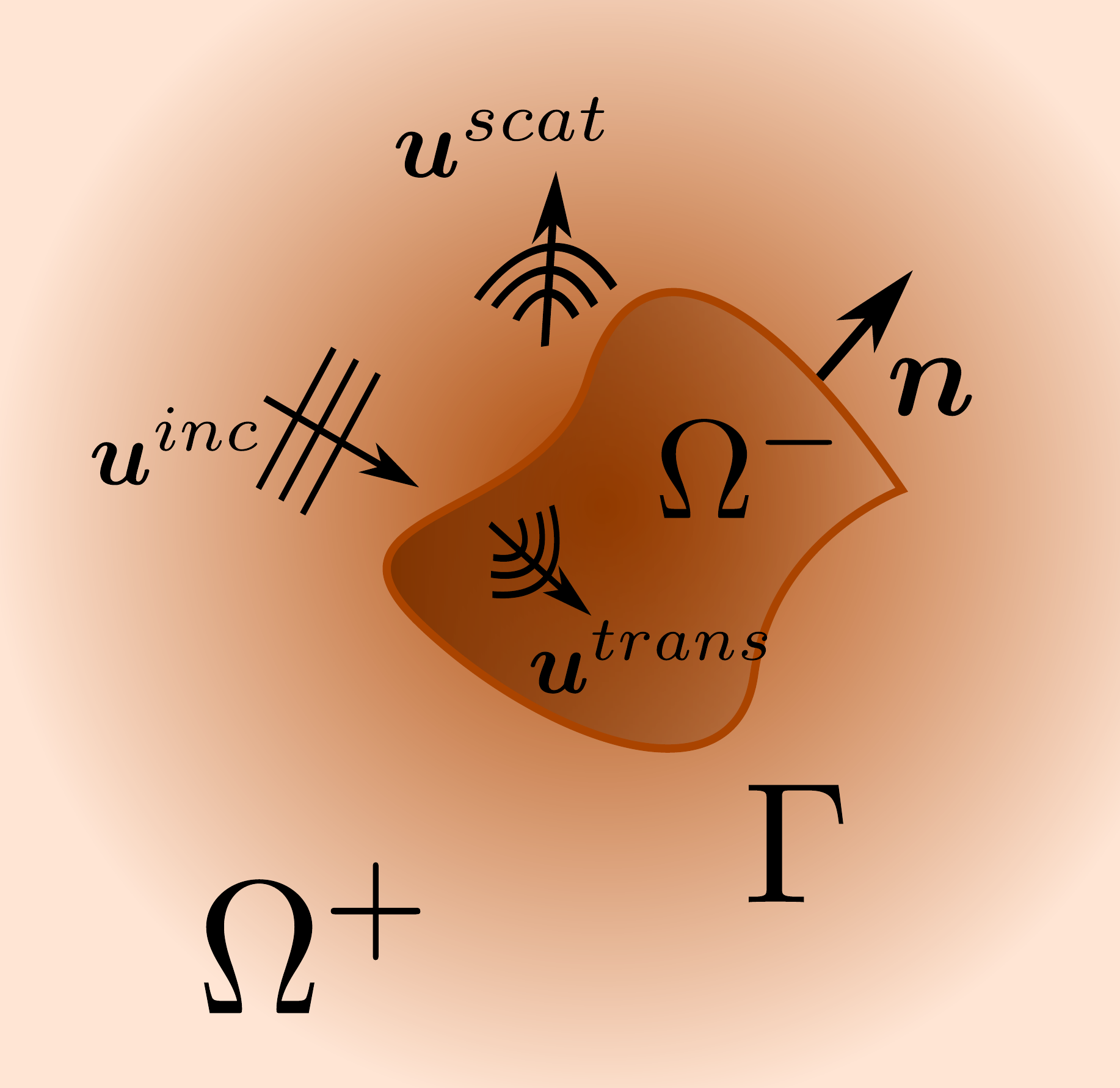}
\caption{Schematic of the problem geometry. For simplicity, the scattered and transmitted waves will be denoted in the text simply as $\mathbf u^+$ and $\mathbf u^-$ respectively.}\label{fig:geometry}
\end{figure}
Throughout this work we will consider that both $\Omega^-$ and $\Omega^+$ are occupied by linearly elastic materials. In the unbounded region $\Omega^+$ we will assume that the material is isotropic and homogeneous with density $\rho_+>0$. The latter assumption of isotropy imples that, besides its density, the material in $\Omega^+$ is characterized by the Lam\'e parameters $\lambda$ and $\mu$. These coefficients are assumed to be constants satisfying the relations
\[
\mu >0   \qquad \text{ and } \qquad K:= \frac{2\mu + 3\lambda}{3}> 0.
\]
The quantity $K$ is known as the \textit{bulk modulus} and characterizes the reaction of a given material to compressive stress, while $\mu$ (also known as the \textit{shear modulus})  characterizes the material's rigidity with respect to shear stress.

In contrast, within the bounded region $\Omega^-$ we will allow the material to be both inhomogeneous and anisotropic. More precisely, the density $\rho_-$ will be considered to be a bounded positive function almost everywhere in $\Omega^-$. In addition, we will allow for the presence of body forces acting inside $\Omega^-$ but will require that they vanish identically on $\Omega^+$. These forces will be denoted by $\mathbf f(\mathbf x,t)$ and will require that: 1) The support of $\mathbf f(\cdot,t)$ is compactly contained in $\Omega^-$ for all times, 2) $\mathbf f(\cdot,t)\in \mathbf L^2(\Omega)$ for all times, 3) $\mathbf f(\cdot,t)\equiv 0$ for all $t\leq 0$, and 4) $\mathbf f(\mathbf x,\cdot)\in\mathcal C^\infty(0,T)$ for some $T>0$. 

The isotropy/anisotropy of a material refers to the way in which mechanical stress induces deformations. Mathematically, this is reflected in the concrete way in which the \textit{linearized elastic strain tensor,}
\[
\boldsymbol \varepsilon(\mathbf u) := \frac{1}{2}\left(\nabla\mathbf u + \nabla\mathbf u^{\top}\right),
\]
relates to the \textit{elastic stress tensor}, $\boldsymbol \sigma(\mathbf u)$. According to Hooke's law, in the regime of linear elasticity and infinitesimal strain the relation is given in terms of the fourth order \textit{stiffness tensor} $\mathbf C$ by
\[
\boldsymbol\sigma (\mathbf u) : = \mathbf C \,\boldsymbol\varepsilon(\mathbf u).
\] 
The components of the stiffness tensor satisfy the following pairwise symmetry conditions
\[
\mathrm C_{ijkl} = \mathrm C_{jikl} = \mathrm C_{klij}.
\]
In addition, we will require that each of the component functions $\mathrm C_{ijkl}$ is essentially bounded in $\Omega_-$, and that there exists a constant $c_0>0$ such that
\[
\boldsymbol \varepsilon(\mathbf u) : \mathbf C \,\boldsymbol \varepsilon(\mathbf u) \geq c_0 \boldsymbol \varepsilon(\mathbf u) : \boldsymbol \varepsilon(\mathbf u),
\]
almost everywhere in $\Omega^-$.
In the isotropic region, the stress tensor can be expressed succinctly in terms of the Lam\'e parameters as 
\[
\boldsymbol\sigma_+(\mathbf u) : = \lambda\nabla\cdot\mathbf u \,\mathbf I + 2\mu\boldsymbol\varepsilon(\mathbf u),
\]
where the identity matrix has been denoted as $\mathbf I$. As customary, we will denote the elastic stress in the normal direction, or \textit{traction}, as
\[
\mathbf T^{i}\mathbf u^{i} := \boldsymbol\sigma_{i}(\mathbf u_i)\,\boldsymbol n, \qquad (i\in\{+,-\}).
\]

We will consider that a small perturbation $\mathbf u^{inc}$, supported away from $\overline{\Omega^-}$ for $t\leq 0$,  travels in the material eventually impinging on the region $\Omega^-$ for some positive time. This perturbation will induce transmitted, $\mathbf u^-$, and scattered, $\mathbf u^+$ waves that, in the regime of small stress and strain can be modeled by the equations
\begin{subequations}\label{eq:TimeDomainSystem}
\begin{alignat}{6}
-\Delta^*_+\mathbf u^+ + \rho_+\frac{\partial^2}{\partial t^2}\mathbf u^+ =\,& \boldsymbol 0  \qquad& \text{ in } \Omega^+\times(0,T), \\
-\Delta^*_-\mathbf u^- + \rho_-\frac{\partial^2}{\partial t^2}\mathbf u^- =\,& \mathbf f \qquad& \text{ in } \Omega^-\times(0,T),
\end{alignat}
which must be understood in the sense of distributions. The elastic elliptic operator $\Delta^*_i$ appearing above is defined as $\Delta^*_i\mathbf u : = \nabla\cdot\boldsymbol\sigma_i(\mathbf u)$, where the divergence operator acts along the rows of the tensor $\boldsymbol\sigma_i(\mathbf u)$. For the unbounded component, this simplifies into the familiar expression
\[
\Delta^*_+\mathbf u^+: = \mu\Delta\mathbf u^+ + (\Lambda + \mu)\nabla\,\nabla\cdot\mathbf u^+.
\]
The total displacement, along with the normal stress induced by it, must be continuous across the interface between the two materials, leading to the  transmission conditions
\begin{alignat}{6}
\mathbf u^- - \mathbf u^+ =\,& \mathbf u^{inc} \qquad& \text{ on } \Gamma\times(0,T), \\
\mathbf T^-\mathbf u^- - \mathbf T^+\mathbf u^+=\,& \mathbf T^+\mathbf u^{inc} \qquad& \text{ on } \Gamma\times(0,T).
\end{alignat}

The dynamical equations describing the evolution of the system are closed by requiring the scattered and transmitted waves to be causal, namely
\begin{alignat}{6}
\label{eq:Causality+}
\mathbf u^+ = \frac{\partial}{\partial t}\mathbf u^+ = \boldsymbol 0 \qquad \text{ on } \Omega^+\times\{0\}, \\
\label{eq:Causality-}
\mathbf u^- = \frac{\partial}{\partial t}\mathbf u^- = \boldsymbol 0 \qquad \text{ on } \Omega^-\times\{0\}.
\end{alignat}
\end{subequations}

Together, the equations comprising the system  \eqref{eq:TimeDomainSystem} fully describe the evolution of the scattered and transmitted wave.

\subsection{Governing equations in the Laplace domain}
In their seminal article \cite{BaHa:1986a} Bamberger and Ha-Duong laid the groundwork for the analysis of wave propagation problems by passing through the Laplace domain. Their goal was to study the well posedness of a boundary integral formulation of the wave equation in terms of the potentials associated to the Helmholtz operator, rather than the retarded potentials that arise directly in the time domain. Their approach, combined with Lubich's method of Convolution Quadrature (CQ) \cite{Lubich:1988a, Lubich:1988b, Lubich:1994} enables the design of numerical schemes that, taking as input time-domain data, approximate the solutions of the dynamical problem using only the potentials associated to the resolvent equation---which typically result in simpler discretizations than their time-domain counterparts. The reader interested in further details about CQ is referred to \cite{HaSa:2016} for a brief introduction to the theory and implementation details, or to the monograph \cite{Sayas:2016} for a thorough analysis and detailed applications to time domain boundary integral equations.

With the goal of formulating the problem in a way that takes advantage of Lubich's analysis technique, we will then recast the system \eqref{eq:TimeDomainSystem} in the Laplace domain; we will start by  introducing some notation. The positive complex half-plane will be denoted by
\[
\mathbb C^+ : = \{s\in\mathbb C : \mathrm{Re}\, s >0\},
\]
and for $s\in \mathbb C^+$, we will also use the notation
\[
\sigma := \mathrm{Re}\,s\, \qquad \underline{\sigma} := \min\{1,\sigma\}.
\]
We will say that a function $f$ is \textit{causal} if $f(\mathbf x,t) \equiv 0$ for all $t\leq 0$ and we will define its Laplace transform as
\[
\mathcal L\{f\}(s):= F(s) = \int_0^{\infty}f(t)e^{-st}\,dt
\]
whenever the integral converges. As laid out in \cite{LaSa:2009}, this definition can be extended naturally to Hilbert space valued distributions whenever, for any $\alpha>0$, the function $\exp(-\alpha\cdot)f$ defines a tempered distribution with values in the space of linear mappings $L(X,Y)$ between the Hilbert spaces $X$ and $Y$. In the current manuscript, we shall understand the Laplace transform in its distributional sense.

Applying the Laplace transform to the system \eqref{eq:TimeDomainSystem} presented in the previous section, we obtain the following boundary value problem in the Laplace domain that we will study for $s\in\mathbb C^+$

\begin{subequations}\label{eq:LaplaceSystem}
\begin{align}
\label{eq:ExteriorEquation}
-\Delta^*_+\mathbf U^+ + \rho_+s^2\mathbf U^+ =\,& \boldsymbol 0 \qquad&& \text{ in } \Omega^+, \\
\label{eq:InteriorEquation}
-\Delta^*_-\mathbf U^- + \rho_-s^2\mathbf U^- =\,& \mathbf F \qquad&& \text{ in } \Omega^- ,\\
\label{eq:TraceContinuity}
\mathbf U^- - \mathbf U^+ =\,& \mathbf U^{inc} \qquad&& \text{ on } \Gamma, \\
\label{eq:TractionContinuity}
\mathbf T^-\mathbf U^- - \mathbf T^+\mathbf U^+=\,& \mathbf T^+\mathbf U^{inc} \qquad&& \text{ on } \Gamma,
\end{align}
\end{subequations}
where the functions in capital letters represent the Laplace transform of their time-domain counterparts appearing in the system \eqref{eq:TimeDomainSystem}, and the causality conditions \eqref{eq:Causality+} and \eqref{eq:Causality-} are implicitly enforced by the causality of the Laplace transform. In the next couple of sections we will focus on the analysis of the Laplace-domain system above, establishing its well posedness and obtaining stability bounds in terms of the Laplace parameter $s$. These Laplace-domain results will then be transferred in the final section into time-domain statements making use of a slight improvement by Sayas \cite{Sayas:2016} of a result by Lubich \cite{Lubich:1994}.

\section{Boundary-field formulation}\label{sec:BIF}
\subsection{Sobolev space notation}
Throughout this communication we will make use of standard results on Sobolev space theory that can be found in standard references such as \cite{Adams:2003, HsWe:2021}. The space of scalar square-integrable functions defined over the open domain $\mathcal O$ will be denoted by $L^2(\mathcal O)$, while the space of scalar-valued square integrable functions over $\mathcal O$ whose distributional derivative is itself a square integrable function will be denoted by $H^1(\mathcal O)$. The vector-valued counterparts of these spaces will be denoted with boldface notation as $\boldsymbol L^2(\mathcal O):=L^2(\mathcal O)\times L^2(\mathcal O)\times L^2(\mathcal O)$ and $\boldsymbol H^1(\mathcal O):=H^1(\mathcal O)\times H^1(\mathcal O) \times H^1(\mathcal O)$ respectively. 

The $L^2$-inner product will be denoted by the symbol $(\cdot,\cdot)_{\mathcal O}$ and should be understood as
\begin{align*}
(\mathbf U,\mathbf V)_{\mathcal O}  =\,& \int_{\mathcal O} \mathbf U\cdot\mathbf V && \text{ (for vectors)},  \\
(\mathbf M,\mathbf N)_{\mathcal O}  =\,& \int_{\mathcal O} \mathbf M\,:\,\mathbf N && \text{ (for matrices)}.
\end{align*}

The $\boldsymbol L^2(\mathcal O)$ and $\boldsymbol H^1(\mathcal O)$ norms induced by this inner product will be denoted respectively as
\[
\| \mathbf U \|^2_{\mathcal O} : = ( \mathbf U,\overline{ \mathbf U})_{\mathcal O}, \qquad \qquad \| \mathbf U \|^2_{1,\mathcal O} : = ( \mathbf U,\overline{ \mathbf U})_{\mathcal O} + (\nabla \mathbf U,\overline{\nabla \mathbf  U})_{\mathcal O},
\]
where the overline denotes complex conjugation. The definition of the inner products ensures that this definition encompasses both the vector and matrix norms.

The notion of ``restriction to the boundary" can be extended to functions in $H^{1}(\Omega)$. This generalized restriction operator is known as the \textit{trace operator} and will be denoted by $\gamma$ and coincides exactly with the restriction if a function is continuous on $\overline{\mathcal O}$. The space of functions defined on $\partial\mathcal O$ and corresponding to the trace of a function in $H^1(\mathcal O)$ will be denoted as $H^{1/2}(\partial\mathcal O)$; its dual space will be denoted by $H^{-1/2}(\partial\mathcal O)$---the corresponding spaces for vector valued functions will be distinguished by the use of boldface. The duality pairing between $\mu\in H^{-1/2}(\partial\mathcal O)$ and $\eta\in H^{1/2}(\partial\mathcal O)$ will be denoted by angled brackets $\langle\mu,\eta\rangle_{\partial\mathcal O}$. This product agrees with the integral of $\int_{\partial\mathcal O}\mu\eta$ whenever the integral is well defined. The induced norms in the trace space and its dual will be denoted by $\|\cdot\|_{1/2,\partial\mathcal O}$ and $\|\cdot\|_{-1/2,\partial\mathcal O}$ respectively. In all cases, and for the sake of notational simplicity, we will omit the domain from the norm whenever it is clear from the context. 

The natural space for the solutions to the Laplace domain Navier-Lam\'e equations will be defined as
\[
\boldsymbol H^{1}_{\Delta^*}(\mathcal O) : = \left\{\mathbf U : \|\mathbf U\|_{\Delta^*,\mathcal O}<\infty \right\},
\]
where the norm in the definition is given by
\[
\|\mathbf U\|_{\Delta^*,\mathcal O}^2: = \|\mathbf U\|_{\mathcal O}^2 + \|\nabla\mathbf U\|_{\mathcal O}^2 + \|\Delta^*\mathbf U\|_{\mathcal O}^2.
\]
If the boundary of $\mathcal O$ is Lipschitz, the following integration by parts formula (Betti's formula) holds for all functions $\mathbf U \in \boldsymbol H^{1}_{\Delta^*}(\mathbf R^d)$ and $\mathbf V \in \boldsymbol H^1(\mathbb R^d)$
\begin{equation}\label{eq:BettisFormula}
\mp\langle\mathbf T^\pm\mathbf  U,\gamma^\pm\mathbf V\rangle_{\partial\mathcal O} =  (\boldsymbol\sigma(\mathbf U),\boldsymbol\varepsilon(\mathbf V))_{\mathcal O_\pm} + (\Delta^*\mathbf U,\mathbf V)_{\mathcal O_\pm},
\end{equation}
where $\mathcal O_-$ denotes the bounded region enclosed by $\partial\mathcal O$,  $\mathcal O_+:=\mathbb R^3\setminus\overline{\mathcal O_-}$ its unbounded complement, and symbols $\gamma^-$ and $\gamma^+$ denote the trace operators for the inner and outer domain respectively . Betti's formula above also serves as a definition of the distributional traction operator and will be essential in deriving the weak formulation of the problem.

We will also make use of the following energy norm for $s\in\mathbb C_+$ and $\rho>0$:
\begin{equation}\label{eq:EnergyNorm}
\triple{\mathbf U}^2_{|s|,\mathcal O} := (\boldsymbol\sigma\left(\mathbf U\right),\overline{\boldsymbol\varepsilon\left(\mathbf U\right)})_{\mathcal O} + \|s\sqrt{\rho}\,\mathbf U\|_{\mathcal O}^2. 
\end{equation}
The following norm equivalence holds
\begin{equation}\label{eq:EnergyEquivalence}
\underline{\sigma}\triple{\mathbf U}_{1,\mathcal O} \leq \triple{\mathbf U}_{|s|,\mathcal O} \leq \frac{|s|}{\underline{\sigma}}\triple{\mathbf U}_{1,\mathcal O}.
\end{equation}

Throughout the manuscript, we will make frequent use of the fact that, for $s=1$, the norm $\triple{\cdot}_{1,\mathcal O}$ is equivalent to the $\mathbf H^1(\mathcal O)$ norm. 

Finally, the symbol $\lesssim$ will be frequently used in the estimates to obviate generic constants that \textit{do not depend on the Laplace parameter $s$ or its real part $\sigma$}. Hence, the expression $\mathbf U \lesssim \mathbf V
$ should be understood as: ``There exists a positive constant $C$ independent of $s$ such that $\mathbf U \leq C \mathbf V$. The dependence of the estimates with respect to the Laplace parameter will be tracked explicitly.
\subsection{Elastic layer potentials and operators}
We will now recast the exterior part of the Laplace domain problem \eqref{eq:LaplaceSystem} in terms of boundary integral equations \cite{HsWe:2021}. To that end, we will first introduce the fundamental solution $\mathbf E(\boldsymbol x,\boldsymbol y; s)$ to the exterior three-dimensional Navier-Lam\'e equation \eqref{eq:ExteriorEquation} given by \cite[Chapter 2]{kupradze}
\[
\mathbf E(\boldsymbol x,\boldsymbol y; s) := \frac{1}{4\pi \mu}\frac{\exp(-s|\boldsymbol x-\boldsymbol y|/c_s)}{|\boldsymbol x-\boldsymbol y|}\mathbf I +\frac{1}{4\pi \rho_+s^2} \nabla\nabla^\top\left(\frac{\exp(-s|\boldsymbol x-\boldsymbol y|/c_p)-\exp(-s|\boldsymbol x-\boldsymbol y|/c_s)}{4\pi|\boldsymbol x-\boldsymbol y|} \right),
\]
where $c_s:=\sqrt{\mu/\rho_+}$ and $c_p:=\sqrt{(2\mu+\lambda)/\rho_+}$ correspond respectively to the speed of shear (transversal) and pressure (longitudinal) waves travelling through $\Omega_+$, and $\mathbf I$ denotes the identity matrix. Note that the time-harmonic fundamental solution can be recovered from the expression above by the substitution $s \mapsto -i\omega$ (see, e.g., \cite{AhHs:1975}).

Using the fundamental solution and considering $\boldsymbol\varPhi\in\boldsymbol H^{1/2}(\Gamma)$, and $\boldsymbol\Lambda\in\boldsymbol H^{-1/2}(\Gamma)$ it is possible to define two layer potentials
\begin{subequations}\label{eq:LayerPotentials}
\begin{align}
\mathcal S(s)\boldsymbol\Lambda(\boldsymbol x) :=\,& \int_\Gamma \mathbf E(\boldsymbol x,\boldsymbol y; s)\boldsymbol\Lambda(\boldsymbol y)\cdot\boldsymbol{d\Gamma}_{\boldsymbol y} \quad& \text{(Single layer potential)},\\
\mathcal D(s)\boldsymbol\varPhi(\boldsymbol x) :=\,& \int_\Gamma \mathbf T^+\mathbf E(\boldsymbol x,\boldsymbol y; s)\boldsymbol\varPhi(\boldsymbol y)\cdot\boldsymbol{d\Gamma}_{\boldsymbol y} \quad& \text{(Double layer potential)},
\end{align}
\end{subequations}
which both satisfy the equation \eqref{eq:ExteriorEquation} for any $\boldsymbol x\in \mathbb R^3\setminus\Gamma$. These functions satisfy the following conditions across the boundary $\Gamma$
\begin{equation}\label{eq:JumpConditions}
\jump{\mathcal S(s)\boldsymbol\Lambda} = \boldsymbol 0, \qquad \jump{\mathbf T\mathcal S(s)\boldsymbol\Lambda} = \boldsymbol\Lambda, \qquad \jump{\mathcal D(s)\boldsymbol\varPhi} = -\boldsymbol\varPhi, \qquad \jump{\mathbf T\mathcal D(s)\boldsymbol\varPhi} = \boldsymbol 0,
\end{equation}
where for a function $\boldsymbol v$ and all points $\boldsymbol y\in\Gamma$ the jump operator $\jump{\cdot}$ is defined as
\[
\jump{\boldsymbol v} : = \lim_{\epsilon\to 0}\left(\boldsymbol v(\boldsymbol y - \epsilon\boldsymbol n)-\boldsymbol v(\boldsymbol y + \epsilon\boldsymbol n)\right).
\]
Considering the average value of the layer potentials, rather than their jump, leads to the following four boundary integral operators
{\small \begin{equation}
\label{eq:Operators}
\begin{array}{cccc}
\mathcal V\boldsymbol\Lambda := \ave{\mathcal S(s)\boldsymbol\Lambda} \quad& \text{(Single layer)},\quad&
\mathcal K(s)\boldsymbol\varPhi :=\, \ave{\mathcal D(s)\boldsymbol\varPhi} \quad& \text{(Double layer)},\\
\mathcal K^\prime(s)\boldsymbol\Lambda :=\, \ave{\mathbf T\mathcal S(s)\boldsymbol\Lambda} \quad& \text{(Adjoint double layer)},\quad&
\mathcal W(s)\boldsymbol\varPhi :=\, -\ave{\mathbf T \mathcal D(s)\boldsymbol\varPhi} \quad& \text{(Hypersingular)},
\end{array}
\end{equation} }
where average the operator $\ave{\cdot}$ is defined as
\[
\ave{\boldsymbol v} : = \frac{1}{2}\lim_{\epsilon\to 0}\left(\boldsymbol  v(\boldsymbol y - \epsilon\boldsymbol n)+\boldsymbol  v(\boldsymbol y + \epsilon\boldsymbol n)\right) \quad \text{ for all } \boldsymbol y\in \Gamma.
\]
Combining the jump conditions \eqref{eq:JumpConditions} with the layer operators \eqref{eq:Operators} it is possible to derive the following identities:
\begin{subequations}\label{eq:TractionTraceIDs}
\begin{align}
\label{eq:TractionID}
\mathbf T^\pm \mathcal S(s)\boldsymbol\Lambda(\boldsymbol y) =\,\left(\mp\tfrac{1}{2}\mathcal I + \mathcal K^\prime(s)\right)\boldsymbol\Lambda(\boldsymbol y),\\ 
\label{eq:TraceID}
\gamma^\pm \mathcal D(s)\boldsymbol\varPhi(\boldsymbol y) =\,\left(\pm\tfrac{1}{2}\mathcal I + \mathcal K(s)\right)\boldsymbol\varPhi(\boldsymbol y),
\end{align}
\end{subequations}
where the identity operator was denoted by $\mathcal I$. 
\subsection{Two non-local boundary problems}
Since the functions defined by the single and double later potentials \eqref{eq:LayerPotentials} satisfy the exterior equation \eqref{eq:ExteriorEquation}, we will then propose two representations involving the single and double layer potentials in terms of unknown densities $\boldsymbol\Lambda$ and $\boldsymbol\varPhi$. To determine these we will make use of the transmission conditions \eqref{eq:TraceContinuity} and \eqref{eq:TractionContinuity} to derive two different boundary integral formulations. 
%
\subsubsection{A direct formulation}
%
We start by proposing an ansatz of the form
\begin{equation}\label{eq:IntegralRepresentationA}
\mathbf U^+(\boldsymbol x) = \left\{\begin{array}{cc}
\mathcal D(s)\boldsymbol\varPhi(\boldsymbol x) - \mathcal S(s)\boldsymbol\Lambda(\boldsymbol x) \qquad& \text{ for }\boldsymbol x \in\Omega+
, \\
\boldsymbol 0 \qquad& \text{ for }\boldsymbol x \in\Omega_-.
\end{array}
\right.
\end{equation}
To determine the unknown functions $\boldsymbol\Lambda$ and $\boldsymbol\varPhi$ we observe that from the transmission condition \eqref{eq:TraceContinuity} and the integral representation of $\mathbf U^+$ it follows that
\begin{alignat*}{6}
\gamma^-\mathbf U^- - \gamma^+\mathbf U^+ =\,& \gamma^-\mathbf U^- +\gamma^+\left(\mathcal S(s)\boldsymbol\Lambda - \mathcal D(s)\boldsymbol\varPhi\right) && \\
=\,& \gamma^-\mathbf U^- + \mathcal V\boldsymbol\Lambda -(\tfrac{1}{2}\mathcal I +\mathcal K(s))\boldsymbol\varPhi =\,&& \gamma^+\mathbf U^{inc},
\end{alignat*}
where we made use of the continuity of the single layer potential \eqref{eq:JumpConditions} and the identity \eqref{eq:TraceID}.

On the other hand, the ansatz for $\mathbf U^+$ for $\boldsymbol x\in \Omega_-$ implies that $\mathbf T^-\mathbf U^+=0$. From this observation two important facts follow. First, using the jump conditions for the normal traction \eqref{eq:JumpConditions}, we have that $\boldsymbol\Lambda = -\jump{\mathbf T\mathbf U^+} = \mathbf T^+\mathbf U^+$ and therefore the transmission condition \eqref{eq:TractionContinuity} can be written as
\[
\mathbf T^-\mathbf U^- -\boldsymbol\Lambda = \mathbf T^+\mathbf U^{inc}.
\]
Second, the integral representation \eqref{eq:IntegralRepresentationA}, the identity \eqref{eq:TractionID}, and $\mathbf T^-\mathbf U^+=0$ lead to
\[
\left(\tfrac{1}{2}\mathcal I + \mathcal K^\prime(s)\right)\boldsymbol\Lambda + \mathcal W(s)\boldsymbol\varPhi = \boldsymbol 0.
\]
We note that if in \eqref{eq:IntegralRepresentationA} $\mathbf U^+(\boldsymbol x)$ is only defined for $\boldsymbol x \in\Omega+$ we will arrive at the same equation 
from 
\[
\boldsymbol\Lambda = - \mathcal W(s)\boldsymbol\varPhi + \left(\tfrac{1}{2}\mathcal I - \mathcal K^\prime(s)\right)\boldsymbol\Lambda,
\]
since $\mathbf T^+\mathbf U^+ = \boldsymbol\Lambda$. Then, the above integral equation implies that 
$\mathbf T^-\mathbf U^+= 0$ and by the uniqueness of the corresponding Neumann  boundary value problem in $\Omega_-$, without loss of generality, we can conclude that $\mathbf U^+ = \boldsymbol 0$ in $\Omega_-$ as in \eqref{eq:IntegralRepresentationA}.

These last three equations together with \eqref{eq:InteriorEquation} give rise to the following non-local problem for the unknown functions $(\mathbf U^-,\boldsymbol\Lambda,\boldsymbol\varPhi)\in \boldsymbol H^1(\Omega^-)\times \boldsymbol H^{-1/2}(\Gamma)\times \boldsymbol H^{1/2}(\Gamma)$:
\begin{subequations}\label{eq:StrongNonLocal}
\begin{alignat}{6}
\label{eq:StrongNonLocalA}
-\Delta^*_-\mathbf U^- + \rho_-s^2\mathbf U^- =\,& \mathbf F \qquad&& \text{ in } \Omega^- ,\\
\label{eq:StrongNonLocalB}
\mathbf T^-\mathbf U^- -\boldsymbol\Lambda =\,& \mathbf T^+\mathbf U^{inc} \qquad&& \text{ on } \Gamma,\\
\label{eq:StrongNonLocalC}
\gamma^-\mathbf U^- +\mathcal V\boldsymbol\Lambda -(\tfrac{1}{2}\mathcal I +\mathcal K(s))\boldsymbol\varPhi =\,& \gamma^+\mathbf U^{inc}, \qquad&& \text{ on } \Gamma ,\\
\label{eq:StrongNonLocalD}
\left(\tfrac{1}{2}\mathcal I + \mathcal K^\prime(s)\right)\boldsymbol\Lambda + \mathcal W(s)\boldsymbol\varPhi =\,& \boldsymbol 0 \qquad&& \text{ on } \Gamma.
\end{alignat}
\end{subequations}

It is clear that, given a solution triplet $(\mathbf U^-,\boldsymbol\Lambda,\boldsymbol\varPhi)$ to \eqref{eq:StrongNonLocal}, we can use the densities $\boldsymbol\Lambda$ and $\boldsymbol\varPhi$ to define $\mathbf U^+$ through the integral representation \eqref{eq:IntegralRepresentationA} and the pair $(\mathbf U^-,\mathbf U^+)$ is then a solution to \eqref{eq:LaplaceSystem}. Conversely, if $(\mathbf U^-,\mathbf U^+)\in \boldsymbol H^{1}(\Omega_-)\times \boldsymbol H^{1}(\Omega_+)$ satisfies the system \eqref{eq:LaplaceSystem} we can then define
\[
\boldsymbol\Lambda := \mathbf T^+\mathbf U^+, \qquad \text{ and } \qquad \boldsymbol\varPhi := \gamma^+\mathbf U^+,
\]
and the triplet $(\mathbf U^-,\boldsymbol\Lambda,\boldsymbol\varPhi)$ will satisfy the integro-differential system \eqref{eq:StrongNonLocal}. Hence, problems \eqref{eq:LaplaceSystem} and \eqref{eq:StrongNonLocal} are equivalent.
%
\subsubsection{An alternative formulation}
Alternatively, we can propose a solution of the form
\begin{equation}\label{eq:IntegralRepresentationB}
\mathbf U^+(\boldsymbol x) = \left\{\begin{array}{cc}
s^{-1}\mathcal D(s)\boldsymbol\varPhi(\boldsymbol x) - \mathcal S(s)\boldsymbol\Lambda(\boldsymbol x) \qquad& \text{ for }\boldsymbol x \in\Omega+
, \\
\boldsymbol 0 \qquad& \text{ for }\boldsymbol x \in\Omega_-.
\end{array}
\right.
\end{equation}
With this ansatz we have that $\jump{\mathbf T\mathbf U^+} =  -\boldsymbol\Lambda = -\mathbf T^+\mathbf U^+$, and therefore the transmission condition \eqref{eq:TractionContinuity} becomes
\[
\mathbf T^-\mathbf U^- - \boldsymbol\Lambda =\, \mathbf T^+\mathbf U^{inc}.
\]
Just like before, computing $\gamma^+\mathbf U$ and $\mathbf T^-\mathbf U^+$ from the integral representation \eqref{eq:IntegralRepresentationB} with the aid of \eqref{eq:Operators} and \eqref{eq:TractionTraceIDs}, together with \eqref{eq:TraceContinuity} and the fact that $\mathbf T^-\mathbf U^+=0$ leads to a set of two boundary integral equations for $\boldsymbol\Lambda$ and $\boldsymbol\varPhi$. Putting it all together we arrive at the following non-local problem
\begin{subequations}\label{eq:StrongNonLocal2}
\begin{alignat}{6}
\label{eq:StrongNonLocal2A}
-\Delta^*_-\mathbf U^- + \rho_-s^2\mathbf U^- =\,& \mathbf F \qquad&& \text{ in } \Omega^- ,\\
\mathbf T^-\mathbf U^- -\boldsymbol\Lambda =\,& \mathbf T^+\mathbf U^{inc} \qquad&& \text{ on } \Gamma,\\
\label{eq:StrongNonLocal2B}
\gamma^-\mathbf U^- +\mathcal V\boldsymbol\Lambda -s^{-1}(\tfrac{1}{2}\mathcal I +\mathcal K(s))\boldsymbol\varPhi =\,& \gamma^+\mathbf U^{inc},\qquad&& \text{ on } \Gamma ,\\ 
\label{eq:StrongNonLocal2C} 
\left(\tfrac{1}{2}\mathcal I + \mathcal K^\prime(s)\right)\boldsymbol\Lambda + s^{-1}\mathcal W(s)\boldsymbol\varPhi =\,& \boldsymbol 0 \qquad&& \text{ on } \Gamma.
\end{alignat}
\end{subequations}

An argument analogous to the one used for the previous formulation shows that given a solution pair $(\mathbf U^-,\mathbf U^+)$ to \eqref{eq:StrongNonLocal}, then the triplet $(\mathbf U^-,\boldsymbol\Lambda:=\mathbf T^+\mathbf U^+,\boldsymbol\varPhi:=s^{-1}\gamma^+\mathbf U^+)$ satisfies \eqref{eq:StrongNonLocal2}. Reciprocally, given $(\mathbf U^-,\boldsymbol\Lambda,\boldsymbol\varPhi)$ satisfying \eqref{eq:StrongNonLocal2}, then defining $\mathbf U^+$ through \eqref{eq:IntegralRepresentationB}, the pair $(\mathbf U^-,\mathbf U^+)$ will determine a solution to \eqref{eq:StrongNonLocal}.

We remark that this relatively unusual representation of $\mathbf U^+$ is motivated from the approach used for acoustic scattering problems in \cite{HSVS:2019,HsWe:2021a}. As will be seen, the special scaling of the trace, $\gamma^+ \mathbf U^+ = s^{-1} \varPhi$, leads to the direct ellipticity of the boundary integral operators involved (compare Lemma \ref{lem:ElliticityB0} below for the alternative representation and Lemma \ref{lem:StronglyEllipticA} for the direct representation).
%
\section{Laplace-domain bounds}\label{sec:LaplaceDomainResults}
\subsection{Variational formulation of the direct problem}
Testing equation \eqref{eq:StrongNonLocalA} with $\mathbf V\in \boldsymbol H^1(\Omega^-)$, equation \eqref{eq:StrongNonLocalC} with $\boldsymbol\mu\in\boldsymbol H^{-1/2}(\Gamma)$ and equation \eqref{eq:StrongNonLocalD} with $\boldsymbol\eta\in\boldsymbol H^{1/2}(\Gamma)$, together with Betti's formula \eqref{eq:BettisFormula} and the transmission condition \eqref{eq:StrongNonLocalB} leads to the variational formulation
\begin{subequations}\label{eq:VariationalForm}
\begin{alignat}{6}
\label{eq:VariationalFormA}
(\boldsymbol\sigma(\mathbf U^-),\boldsymbol\epsilon(\mathbf V))_{\Omega_-}\!\! + s^2(\rho_-\mathbf U^-,\mathbf V)_{\Omega_-}\!\! - \langle\boldsymbol\Lambda,\gamma^-\mathbf  V\rangle_\Gamma =\,& (\mathbf F,\mathbf V)_{\Omega_-}\!\!  + \langle\mathbf T^+\mathbf U^{inc},\gamma^-\mathbf  V\rangle_\Gamma, \\
\label{eq:VariationalFormB}
\langle\boldsymbol\mu,\gamma^-\mathbf U^-\rangle_\Gamma + \langle\boldsymbol\mu,\mathcal V(s)\boldsymbol\Lambda\rangle_\Gamma -\langle\boldsymbol\mu,(\tfrac{1}{2}\mathcal I +\mathcal K(s))\boldsymbol\varPhi\rangle_\Gamma =\,& \langle\boldsymbol\mu,\gamma^+\mathbf U^{inc}\rangle_\Gamma, \\
\label{eq:VariationalFormC}
\langle\left(\tfrac{1}{2}\mathcal I + \mathcal K^\prime(s)\right)\boldsymbol\Lambda,\boldsymbol\eta\rangle_\Gamma + \langle\mathcal W(s)\boldsymbol\varPhi,\boldsymbol\eta\rangle_\Gamma =\,& \boldsymbol 0.
\end{alignat}
\end{subequations}
The key for proving well posedness of this system (laid out in \cite{LaSa:2009} for the Laplace resolvent equation) is to realize that a function defined through layer potentials---in particular through the first line of \eqref{eq:IntegralRepresentationA}---in fact satisfies a transmission problem in the bigger domain $\mathbb R^3\setminus\Gamma$. We thus have the following

\begin{lemma}
The variational problem \eqref{eq:VariationalForm}  is equivalent to  that of finding $(\mathbf U^-,\mathbf U^*)\in \boldsymbol H^1(\Omega^-)\times\boldsymbol H^1(\mathbb R^3\setminus\Gamma)$ such that 
\begin{subequations}
\label{eq:VariationalTransmission}
{\small
\begin{equation}
\label{eq:VariationalTransmissionA}
\gamma^-\mathbf U^- + \gamma^+\mathbf U^* =\,\gamma^+\mathbf U^{inc}
\end{equation}
}
and
{\small
\begin{equation}
\label{eq:VariationalTransmissionB}
(\boldsymbol\sigma(\mathbf U^*),\boldsymbol\epsilon(\mathbf W))_{\mathbb R^3\setminus\Gamma}\! + \! s^2(\rho_+\mathbf U^*\!\!,\mathbf W)_{\mathbb R^3\setminus\Gamma} \!+\! 
(\boldsymbol\sigma(\mathbf U^-),\boldsymbol\epsilon(\mathbf V))_{\Omega_-} \!\!+\! s^2(\rho_-\mathbf U^-\!\!,\mathbf V)_{\Omega_-} \!\!=\, (\mathbf F,\mathbf V)_{\Omega_-}\!\!  + \langle\mathbf T^+\mathbf U^{inc}\!\!,\gamma^-\mathbf  V\rangle_\Gamma
\end{equation}
}
\end{subequations}
for every
\[
(\mathbf V,\mathbf W) \in \mathbf H_0 := \left\{ (\mathbf V,\mathbf W) \in \boldsymbol H^1(\Omega^-)\times\boldsymbol H^1(\mathbb R^3\setminus\Gamma): \gamma^-\mathbf V + \gamma^+\mathbf W = \boldsymbol 0\right\}.
\]
\end{lemma}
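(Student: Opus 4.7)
The plan is to relate the two formulations through the function
$\mathbf U^*(\boldsymbol x) := \mathcal S(s)\boldsymbol\Lambda(\boldsymbol x) - \mathcal D(s)\boldsymbol\varPhi(\boldsymbol x)$ defined on all of $\mathbb R^3\setminus\Gamma$, which extends the ansatz \eqref{eq:IntegralRepresentationA} in a sign-consistent way with the constraint $\gamma^-\mathbf V + \gamma^+\mathbf W = 0$ built into $\mathbf H_0$. By standard mapping properties of elastic layer potentials, $\mathbf U^* \in \boldsymbol H^1(\mathbb R^3\setminus\Gamma)$ and it solves $-\Delta^*_+\mathbf U^* + \rho_+ s^2\mathbf U^* = \boldsymbol 0$ in each component of $\mathbb R^3\setminus\Gamma$. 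The jump relations \eqref{eq:JumpConditions} and the identities \eqref{eq:TractionTraceIDs} give, for this choice, $\gamma^+\mathbf U^* = \mathcal V\boldsymbol\Lambda - (\tfrac12\mathcal I + \mathcal K(s))\boldsymbol\varPhi$, $\mathbf T^-\mathbf U^* = (\tfrac12\mathcal I + \mathcal K'(s))\boldsymbol\Lambda + \mathcal W(s)\boldsymbol\varPhi$, and $\jump{\mathbf T\mathbf U^*} = \boldsymbol\Lambda$, $\jump{\mathbf U^*} = \boldsymbol\varPhi$.

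For the forward direction, I would read \eqref{eq:VariationalFormB} as the $\boldsymbol H^{-1/2}(\Gamma)$--duality statement that $\gamma^-\mathbf U^- + \mathcal V\boldsymbol\Lambda - (\tfrac12\mathcal I + \mathcal K(s))\boldsymbol\varPhi = \gamma^+\mathbf U^{inc}$; substituting the formula for $\gamma^+\mathbf U^*$ yields exactly \eqref{eq:VariationalTransmissionA}. To produce \eqref{eq:VariationalTransmissionB}, I would apply Betti's formula \eqref{eq:BettisFormula} to $\mathbf U^*$ separately in $\Omega_-$ and $\Omega_+$ and add the results, obtaining
\[
(\boldsymbol\sigma(\mathbf U^*),\boldsymbol\varepsilon(\mathbf W))_{\mathbb R^3\setminus\Gamma} + s^2(\rho_+\mathbf U^*,\mathbf W)_{\mathbb R^3\setminus\Gamma} = \langle\mathbf T^-\mathbf U^*,\gamma^-\mathbf W\rangle_\Gamma - \langle\mathbf T^+\mathbf U^*,\gamma^+\mathbf W\rangle_\Gamma.
\]
Equation \eqref{eq:VariationalFormC} is precisely $\mathbf T^-\mathbf U^* = \boldsymbol 0$ in $\boldsymbol H^{-1/2}(\Gamma)$, which kills the first boundary term; the jump relation then forces $\boldsymbol\Lambda = -\mathbf T^+\mathbf U^*$, and the $\mathbf H_0$ constraint $\gamma^-\mathbf V = -\gamma^+\mathbf W$ lets me rewrite the term $\langle\boldsymbol\Lambda,\gamma^-\mathbf V\rangle_\Gamma$ in \eqref{eq:VariationalFormA} as $\langle\mathbf T^+\mathbf U^*,\gamma^+\mathbf W\rangle_\Gamma$. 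Adding the resulting \eqref{eq:VariationalFormA} to the Betti identity telescopes the boundary contributions away and delivers \eqref{eq:VariationalTransmissionB}.

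For the converse, starting from $(\mathbf U^-,\mathbf U^*)$ solving \eqref{eq:VariationalTransmission}, I would define $\boldsymbol\Lambda := \jump{\mathbf T\mathbf U^*} \in \boldsymbol H^{-1/2}(\Gamma)$ and $\boldsymbol\varPhi := \jump{\mathbf U^*} \in \boldsymbol H^{1/2}(\Gamma)$. Testing \eqref{eq:VariationalTransmissionB} with $\mathbf V = \boldsymbol 0$ and $\mathbf W$ supported compactly in one component of $\mathbb R^3\setminus\Gamma$ shows $\mathbf U^*$ solves the elastic resolvent equation in $\mathbb R^3\setminus\Gamma$; combined with the $\boldsymbol H^1(\mathbb R^3\setminus\Gamma)$ decay at infinity, the elastic representation formula (Green's third identity for $\Delta^*_+ - \rho_+ s^2$) gives $\mathbf U^* = \mathcal S(s)\boldsymbol\Lambda - \mathcal D(s)\boldsymbol\varPhi$. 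Next, testing \eqref{eq:VariationalTransmissionB} with $\mathbf V = \boldsymbol 0$ and $\mathbf W$ chosen with $\gamma^+\mathbf W = \boldsymbol 0$ and arbitrary $\gamma^-\mathbf W$, together with Betti's formula, localizes to $\langle\mathbf T^-\mathbf U^*,\gamma^-\mathbf W\rangle_\Gamma = 0$, which by \eqref{eq:TractionTraceIDs} is exactly \eqref{eq:VariationalFormC}. Taking $\gamma^+$ of the representation and substituting into \eqref{eq:VariationalTransmissionA} gives \eqref{eq:VariationalFormB}, and the forward manipulation reversed (now with $\mathbf T^-\mathbf U^* = \boldsymbol 0$ already in hand) recovers \eqref{eq:VariationalFormA} for general $(\mathbf V,\mathbf W)\in\mathbf H_0$.

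The hard part of the argument is not any single computation but the careful bookkeeping of signs and orientations: the constraint $\gamma^-\mathbf V + \gamma^+\mathbf W = \boldsymbol 0$ built into $\mathbf H_0$ differs from the physical jump condition \eqref{eq:TraceContinuity}, and it is precisely this discrepancy that forces the ansatz $\mathbf U^* = \mathcal S(s)\boldsymbol\Lambda - \mathcal D(s)\boldsymbol\varPhi$ to appear with signs opposite to \eqref{eq:IntegralRepresentationA}, and that makes the boundary terms from Betti's formula in $\Omega_-$ and $\Omega_+$ cancel correctly against the duality pairing in \eqref{eq:VariationalFormA}. A secondary subtlety is the use of the representation theorem in the exterior component, which requires the radiation/decay behavior of $\mathbf U^*$ at infinity built into $\boldsymbol H^1(\mathbb R^3\setminus\Gamma)$ to rule out spurious solutions of the homogeneous resolvent equation.
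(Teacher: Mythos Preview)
Your proposal is correct and follows essentially the same route as the paper: in both directions the key ingredients are the definition $\mathbf U^* = \mathcal S(s)\boldsymbol\Lambda - \mathcal D(s)\boldsymbol\varPhi$ (respectively $\boldsymbol\Lambda := \jump{\mathbf T\mathbf U^*}$, $\boldsymbol\varPhi := \jump{\gamma\mathbf U^*}$), the identification of \eqref{eq:VariationalFormC} with $\mathbf T^-\mathbf U^* = \boldsymbol 0$, of \eqref{eq:VariationalFormB} with \eqref{eq:VariationalTransmissionA}, and the use of Betti's formula on $\mathbb R^3\setminus\Gamma$ together with the constraint $\gamma^-\mathbf V = -\gamma^+\mathbf W$ to match \eqref{eq:VariationalFormA} with \eqref{eq:VariationalTransmissionB}. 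The only cosmetic difference is that in the converse the paper tests \eqref{eq:VariationalTransmissionB} once with a general $(\boldsymbol 0,\mathbf W)\in\mathbf H_0$ and then specializes to $\mathbf W$ supported away from $\Gamma$ to separate the PDE from the boundary identity, whereas you perform these two specializations in the opposite order; the content is the same.
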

\begin{proof}
Consider a solution $(\mathbf U^-,\boldsymbol\Lambda,\boldsymbol\varPhi)$ to \eqref{eq:VariationalForm} and define
\begin{equation}\label{eq:IR}
\mathbf U^* : = \mathcal S(s)\boldsymbol\Lambda - \mathcal D(s)\boldsymbol\varPhi,  \quad \text{ for } x \in \mathbb{R}^3 \setminus \Gamma.
\end{equation}
From this integral representation we can make use of \eqref{eq:Operators}, \eqref{eq:TractionID}, and \eqref{eq:TraceID} to compute
\begin{align*}
\gamma^+\mathbf U^* =\,& \mathcal V(s)\boldsymbol\Lambda\ -(\tfrac{1}{2}\mathcal I +\mathcal K(s))\boldsymbol\varPhi,\\
\mathbf T^-\mathbf U^* =\,& \left(\tfrac{1}{2}\mathcal I + \mathcal K^\prime(s)\right)\boldsymbol\Lambda +  \mathcal W(s)\boldsymbol\varPhi.
\end{align*}
Substituting the first of these expressions into \eqref{eq:VariationalFormB} leads to equation \eqref{eq:VariationalTransmissionA}, whereas the second one combined with \eqref{eq:VariationalFormC} implies that, for any $\boldsymbol\eta\in\boldsymbol H^{1/2}(\Gamma)$
\begin{equation}
\label{eq:ZeroInnerTraction}
\langle\mathbf T^-\mathbf U^*,\boldsymbol\eta\rangle_\Gamma = 0.
\end{equation}

Now, since \eqref{eq:VariationalFormA} must hold  for any $\mathbf V\in\boldsymbol H^1(\Omega_-)$, it follows that
\begin{equation}
\label{eqaux5}
(\boldsymbol\sigma(\mathbf U^-),\boldsymbol\epsilon(\mathbf V))_{\Omega_-}\!\! + s^2(\rho_-\mathbf U^-,\mathbf V)_{\Omega_-} - \langle\jump{\mathbf T\mathbf U^*},\gamma^-\mathbf V \rangle_{\Gamma}   = (\mathbf F,\mathbf V)_{\Omega_-}+ \langle \mathbf T^+\mathbf U^{inc},\gamma^-\mathbf V \rangle_{\Gamma},
\end{equation}
here we used the fact that, by construction, $\jump{\mathbf T \mathbf U^*} = \boldsymbol\Lambda$. Moreover, from the definition of $\mathbf U^*$ in terms of layer potentials it also follows that
\begin{equation}
\label{eq:AllSpace}
-\Delta^*_+\mathbf U^* +s^2\rho_+\mathbf U^* = \boldsymbol 0 \qquad \text{ in } \mathbb R^3\setminus \Gamma.
\end{equation}
Testing the equation above with $\mathbf W\in \mathbf H^1(\mathbb R^3\setminus\Gamma)$ leads to
\begin{alignat}{6}
\nonumber
(\boldsymbol\sigma(\mathbf U^*),\boldsymbol\epsilon(\mathbf W))_{\mathbb R^3\setminus\Gamma} + s^2(\rho_+\mathbf U^*\!\!,\mathbf W)_{\mathbb R^3\setminus\Gamma} =\,& \;\phantom{-} \langle\mathbf T^-\mathbf U^*,\gamma^-\mathbf W\rangle_\Gamma - \langle\mathbf T^+\mathbf U^*,\gamma^+\mathbf W\rangle_\Gamma \quad&& {\scriptsize (\text{Applying } \eqref{eq:BettisFormula})}\\
\nonumber
 =\,&  \phantom{-}\;\langle\jump{\mathbf T\mathbf U^*},\gamma^+\mathbf
W\rangle_\Gamma -\langle\mathbf T^-\mathbf U^*,\jump{\gamma \mathbf 
 W}\rangle_\Gamma  \quad&& \\
\label{eq:aux3}
=\,&  \phantom{-}\;\langle\jump{\mathbf T\mathbf U^*},\gamma^+\mathbf W\rangle_\Gamma \quad&& {\scriptsize (\text{From }\eqref{eq:ZeroInnerTraction})}.\end{alignat}
From here, it is easy to see that adding \eqref{eqaux5} and \eqref{eq:aux3} leads to the variational equation \eqref{eq:VariationalTransmissionB}, since the test functions are such that $\gamma^+\mathbf W = -\gamma^-\mathbf V$ .

Conversely, to show (15a) and (15b) imply (14), we begin 
 \eqref{eq:VariationalTransmissionB} with $(\boldsymbol 0, \mathbf W,)\in\mathbf H_0$ and compute
\begin{align*}
0 = (\boldsymbol\sigma(\mathbf U^*),\boldsymbol\epsilon(\mathbf W))_{\mathbb R^3\setminus\Gamma} +  s^2(\rho_+\mathbf U^*,\mathbf W)_{\mathbb R^3\setminus\Gamma} =\,& -\langle\mathbf T^-\mathbf U^*,\gamma^-\mathbf W\rangle_\Gamma + \langle\mathbf T^+\mathbf U^*,\gamma^+\mathbf W\rangle_\Gamma \\
& -  (\Delta_+^* \mathbf U^*,\mathbf W)_{\mathbb R^3\setminus\Gamma} +  s^2(\rho_+\mathbf U^*,\mathbf W)_{\mathbb R^3\setminus\Gamma} \\
=\,& -\langle\mathbf T^-\mathbf U^*,\gamma^-\mathbf W\rangle_\Gamma \\
& -  (\Delta_+^* \mathbf U^*,\mathbf W)_{\mathbb R^3\setminus\Gamma} +  s^2(\rho_+\mathbf U^*,\mathbf W)_{\mathbb R^3\setminus\Gamma},
\end{align*}
where in the last step we used the fact that, since the test pair belongs to $\mathbf H_0$, then $\mathbf V =\boldsymbol 0$ forces $\mathbf\gamma^+\mathbf W= \boldsymbol 0$. The equality above must also hold for all $\mathbf W$ supported away from the boundary, hence we have that
\begin{align}
\nonumber
-  (\Delta_+^* \mathbf U^*,\mathbf W)_{\mathbb R^3\setminus\Gamma} +  s^2(\rho_+\mathbf U^*,\mathbf W)_{\mathbb R^3\setminus\Gamma} =\,& 0, \\
\label{eq:aux6}
\langle\mathbf T^-\mathbf U^*,\gamma^-\mathbf W\rangle_\Gamma =\,& 0, 
\end{align}
must both hold independently. From this, we can conclude that in the sense of distributions
\begin{alignat*}{6}
- \Delta_+^* \mathbf U^* +  s^2 \rho_+\mathbf U^* =\,& 0\quad &&\mbox{in} \quad  \mathbb R^3\setminus\Gamma \\
\mathbf T^-\mathbf U^* =\,& \, 0,  \quad &&\text{on} \quad \Gamma,
\end{alignat*}
and therefore $\mathbf U^*$ admits an integral representation of the form \eqref{eq:IR}, where we will define $\boldsymbol\Lambda := \jump{\mathbf T\mathbf U^*}$ and $ \boldsymbol\varPhi := \jump{\gamma\mathbf U^*}$. If we compute $\gamma^+\mathbf U^*$ from the integral representation using \eqref{eq:Operators} and \eqref{eq:TraceID} and substitute the resulting expression into \eqref{eq:VariationalTransmissionA} we obtain \eqref{eq:VariationalFormB}. Similarly, computing $\mathbf T^-\mathbf U^*$ from the integral representation using \eqref{eq:Operators} and \eqref{eq:TractionID}, and recalling that the mapping $\gamma^-: \boldsymbol H^1(\mathbf \Omega_-) \to \boldsymbol H^{1/2}(\Gamma)$ is surjective, it follows that \eqref{eq:aux6} implies \eqref{eq:VariationalFormC}. Note that \eqref{eq:aux6} also implies that 
\begin{equation}
\label{eq:aux7}
\langle\boldsymbol\Lambda,\boldsymbol\eta\rangle_\Gamma = \langle\jump{\mathbf T\mathbf U^*},\boldsymbol\eta\rangle_\Gamma = -\langle\mathbf T^+\mathbf U^*,\boldsymbol\eta\rangle_\Gamma \qquad \forall\,\boldsymbol\eta\in\boldsymbol H^{1/2}(\Gamma).
\end{equation}

Finally, to obtain \eqref{eq:VariationalFormA} we go back to \eqref{eq:VariationalTransmissionB} and apply \eqref{eq:BettisFormula} to the terms involving $\mathbf W$, keeping in mind \eqref{eq:aux6}. This leads to 
\[
 \langle\mathbf T^+\mathbf U^*,\gamma^+\mathbf W\rangle_\Gamma + (\boldsymbol\sigma(\mathbf U^-),\boldsymbol\epsilon(\mathbf V))_{\Omega_-} \!\!+\! s^2(\rho_-\mathbf U^-\!\!,\mathbf V)_{\Omega_-} \!\!=\, (\mathbf F,\mathbf V)_{\Omega_-}\!\!  + \langle\mathbf T^+\mathbf U^{inc}\!\!,\gamma^-\mathbf  V\rangle_\Gamma.
\]
However, recalling that $\gamma^+\mathbf W = -\gamma^-\mathbf V$ and using \eqref{eq:aux7} we see that the equality above is in fact \eqref{eq:VariationalFormA}, which concludes the proof.
\end{proof}
 
The last lemma implies that in order to show the unique solvability of the non-local system \eqref{eq:VariationalForm}, it is enough to show that the transmission problem \eqref{eq:VariationalTransmission} is well posed. This motivates the definition
\[
\mathbf U_0^*:= \mathbf U^* - \mathbf U^{inc} \in \mathbf H^1_0(\mathbb R^3\setminus\Gamma),
\]
where $\mathbf U^{inc} \in \mathbf H^1(\mathbb R^3\setminus\Gamma)$ is an extension of $\gamma^+\mathbf U^{inc}$ such that $\mathbf U^{inc} \equiv \mathbf 0$ in $\Omega_-$, and of the bilinear and linear forms 
\begin{alignat*}{6}
A\left((\mathbf U^-,\mathbf U_0^*),(\mathbf V,\mathbf W)\right) :=\,&\phantom{+}\;\, (\boldsymbol\sigma(\mathbf U^-),\boldsymbol\epsilon(\mathbf V))_{\Omega_-} + s^2(\rho_-\mathbf U^-,\mathbf V)_{\Omega_-}\\
& +  (\boldsymbol\sigma(\mathbf U_0^*),\boldsymbol\epsilon(\mathbf W))_{\mathbb R^3\setminus\Gamma} + s^2(\rho_-\mathbf U_0^*,\mathbf W)_{\mathbb R^3\setminus\Gamma},  \\[1ex]
L\left((\mathbf V,\mathbf W)\right) :=\,& \phantom{-} (\mathbf F,\mathbf V)_{\Omega_-}  +  \langle\mathbf T^+\mathbf U^{inc},\gamma^-\mathbf  V\rangle_\Gamma  - A\left((\mathbf 0,\mathbf U^{inc} ),(\mathbf V,\mathbf W)\right).
\end{alignat*}
In the last term above and in the sequel, the function $\mathbf U^{inc}$ should be understood as an $\boldsymbol H^1$ extension of the boundary data into $\Omega_+$. With this notation, and defining $\mathbf U^*_0:=\mathbf U^* - \mathbf U^{inc}$, the variational problem \eqref{eq:VariationalTransmission} can posed as that of finding $(\mathbf U^-, \mathbf U^*_0) \in \mathbf H_0$ such that\\
\begin{equation}
\label{eq:CompactForm}
A\left((\mathbf U^-,\mathbf U^*_0),(\mathbf V,\mathbf W)\right) = L\left((\mathbf V,\mathbf W)\right) \qquad  \forall (\mathbf V,\mathbf W)\in\mathbf H_0.
\end{equation}

We will now show that this problem has indeed a unique solution
\begin{lemma}
\label{lem:StronglyEllipticA}
The bilinear form $A(\cdot,\cdot)$ defined above is strongly elliptic.
\end{lemma}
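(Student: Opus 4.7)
The proof strategy rests on the standard Laplace-domain device of testing with the complex conjugate of $(\mathbf U^-,\mathbf U_0^*)$ (which remains an admissible pair in $\mathbf H_0$, since conjugation preserves the linear trace constraint), multiplying the resulting expression by $\bar{s}$, and extracting the real part. The identities $\mathrm{Re}(\bar{s})=\sigma$ and $\mathrm{Re}(\bar{s}s^2)=\sigma|s|^2$ ensure that all four summands of $A$ acquire a common positive prefactor $\sigma$; moreover, by the symmetries $\mathrm C_{ijkl}=\mathrm C_{klij}$, the stress-strain pairings $(\boldsymbol\sigma(\mathbf U),\overline{\boldsymbol\varepsilon(\mathbf U)})$ are automatically real-valued, so no cancellation occurs between the two contributions.

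Next I would invoke the material coercivity hypotheses separately in the two subdomains. In $\Omega_-$ the assumption $\boldsymbol\varepsilon:\mathbf C\boldsymbol\varepsilon \geq c_0\,\boldsymbol\varepsilon:\boldsymbol\varepsilon$ applies directly. In the isotropic exterior region I would use the algebraic decomposition $\boldsymbol\sigma_+:\boldsymbol\varepsilon = 2\mu|\boldsymbol\varepsilon^D|^2 + K(\mathrm{tr}\,\boldsymbol\varepsilon)^2$ (with $\boldsymbol\varepsilon^D$ denoting the deviatoric part) together with $\mu>0$ and $K>0$ to conclude $\boldsymbol\sigma_+:\boldsymbol\varepsilon \gtrsim |\boldsymbol\varepsilon|^2$. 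Combining these strain-energy bounds with the mass contributions $\sigma|s|^2\|\sqrt{\rho_\pm}\,\cdot\|^2$ reassembles the energy norm of \eqref{eq:EnergyNorm}, yielding the intermediate estimate
\[
\mathrm{Re}\bigl[\bar{s}\,A\bigl((\mathbf U^-,\mathbf U_0^*),(\overline{\mathbf U^-},\overline{\mathbf U_0^*})\bigr)\bigr] \gtrsim \sigma\Bigl(\triple{\mathbf U^-}^2_{|s|,\Omega_-} + \triple{\mathbf U_0^*}^2_{|s|,\mathbb R^3\setminus\Gamma}\Bigr).
\]

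To convert this strain-based control into full $\mathbf H^1$-ellipticity, I would apply Korn's second inequality in $\Omega_-$ and in each connected component of $\mathbb R^3\setminus\Gamma$, using the $L^2$ mass term to absorb the lower-order remainder that Korn's estimate leaves behind. The norm equivalence \eqref{eq:EnergyEquivalence} then translates the outcome into an ellipticity constant proportional to $\sigma\underline{\sigma}^2/|s|$ multiplying the sum of the $\mathbf H^1$ norms on each subdomain. The principal obstacle I anticipate is bookkeeping rather than ingenuity: every factor of $|s|$, $\sigma$, and $\underline{\sigma}=\min\{1,\sigma\}$ must be retained explicitly, since these are precisely the quantities that will later determine the regularity requirements on the data and the polynomial growth factors when the estimate is transferred to the time domain via the Lubich--Sayas machinery of Section \ref{sec:TimeDomain}. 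Verifying Korn's inequality in the unbounded component $\Omega_+$ also requires mild care, but is standard once one observes that the $|s|^2$-weighted $L^2$ norm of $\mathbf U_0^*$ suffices to control potential rigid-motion modes.
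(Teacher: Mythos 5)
Your first paragraph reproduces the paper's proof exactly; the remaining two paragraphs, while not wrong, are not part of it. Because the energy norm \eqref{eq:EnergyNorm} is \emph{defined} as $(\boldsymbol\sigma(\mathbf U),\overline{\boldsymbol\varepsilon(\mathbf U)})_{\mathcal O} + \|s\sqrt{\rho}\,\mathbf U\|_{\mathcal O}^2$, with the stress--strain pairing entering verbatim, the bound you write with $\gtrsim$ is actually an identity,
\[
\mathrm{Re}\left[ \overline{s}\,A\bigl((\mathbf U^-,\mathbf U_0^*),(\overline{\mathbf U^-},\overline{\mathbf U_0^*})\bigr)\right] = \sigma\left(\triple{\mathbf U^-}_{|s|,\Omega_-}^2 + \triple{\mathbf U_0^*}_{|s|,\mathbb R^3\setminus\Gamma}^2\right),
\]
and no coercivity of $\mathbf C$, positivity of $\mu$ and $K$, or Korn inequality is used to get it. Those hypotheses are what make $\triple{\cdot}_{1,\mathcal O}$ equivalent to the $\mathbf H^1(\mathcal O)$ norm (stated as a fact in the preliminaries) and justify the norm equivalence \eqref{eq:EnergyEquivalence}; the paper keeps them out of this lemma so that ellipticity is expressed as a clean identity in the $s$-weighted energy norm, with the $|s|$ and $\underline{\sigma}$ factors appearing only when \eqref{eq:EnergyEquivalence} is invoked downstream. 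In short: correct, and essentially the same strategy, but you did substantially more work than this particular lemma requires.
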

\begin{proof}
This follows directly from the computation
\begin{align*}
\mathrm{Re}\,\left[ \overline{s}A((\mathbf U^-,\mathbf U_0^*),(\overline{\mathbf U^-},\overline{\mathbf U_0^*}))\right] =\,& \phantom{+}\; \mathrm{Re}\,\left[\overline{s}(\boldsymbol\sigma(\mathbf U^-),\boldsymbol\epsilon(\overline{\mathbf U^-}))_{\Omega_-} + s(\rho_-s\mathbf U^-,\overline{s\mathbf U^-})_{\Omega_-}\right] \\
& + \mathrm{Re}\,\left[\overline{s}(\boldsymbol\sigma(\mathbf U_0^*),\boldsymbol\epsilon(\overline{\mathbf U_0^*}))_{\mathbb R^3\setminus\Gamma} + s(\rho_+s\mathbf U_0^*,\overline{s\mathbf U_0^*})_{\mathbb R^3\setminus\Gamma}\right]\\
=\,& \sigma\left(\triple{\mathbf U^-}_{|s|,\Omega_-}^2 + \triple{\mathbf U_0^*}_{|s|,\mathbb R^3\setminus\Gamma}^2\right). 
\end{align*}
\end{proof}
 
We will now make use of the lemma above to estimate the norms of the densities $\boldsymbol\varPhi$ and $\boldsymbol\Lambda$ in terms of the energy norm of the displacement field $\mathbf U$. In order to do so we will also need to make use of the following two lemmas.

\begin{lemma}\label{lem:NormalTraceBound}
If for a Lipschitz domain $\mathcal O$ and $s\in\mathbb C_+$ the function $\mathbf U\in \boldsymbol H_{\Delta^*}^1(\mathcal O)$ satisfies 
\[
-\Delta^*\mathbf U +s^2\mathbf U= \boldsymbol 0 \quad \text{ in } \mathcal O,
\]
then there exists $C_{\partial\mathcal O}>0$  such that
\[
\|\mathbf T\mathbf U\|_{-1/2,\partial\mathcal O}^2\leq C_{\partial\mathcal O} \frac{|s|}{\underline{\sigma}} \triple{\mathbf U}_{|s|,\mathcal O}^2.
\]
\end{lemma}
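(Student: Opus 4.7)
The plan is to express $\langle\mathbf T\mathbf U,\boldsymbol\eta\rangle_{\partial\mathcal O}$ as a volume integral via Betti's formula, bound the result using Cauchy--Schwarz in the $\triple{\cdot}_{|s|,\mathcal O}$-norm, and then exhibit a lifting $\mathbf V$ of $\boldsymbol\eta$ whose $\triple{\cdot}_{|s|,\mathcal O}$-energy is controlled by $(|s|/\underline\sigma)\|\boldsymbol\eta\|_{1/2,\partial\mathcal O}^2$. The main (and essentially only) technical obstacle will be the last step: constructing this $s$-tuned lifting.

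First I would reduce the estimate to a duality statement. By the definition of the dual norm,
\[
\|\mathbf T\mathbf U\|_{-1/2,\partial\mathcal O} = \sup_{\boldsymbol\eta \neq 0}\frac{|\langle\mathbf T\mathbf U,\boldsymbol\eta\rangle_{\partial\mathcal O}|}{\|\boldsymbol\eta\|_{1/2,\partial\mathcal O}}.
\]
For fixed $\boldsymbol\eta\in\boldsymbol H^{1/2}(\partial\mathcal O)$, I would pick an $\boldsymbol H^1$-lifting $\mathbf V$ with $\gamma\mathbf V = \bar{\boldsymbol\eta}$ (to be specified below) and apply Betti's formula \eqref{eq:BettisFormula} together with the PDE $\Delta^*\mathbf U = s^2\mathbf U$ to obtain
\[
|\langle\mathbf T\mathbf U,\boldsymbol\eta\rangle_{\partial\mathcal O}| = \bigl|(\boldsymbol\sigma(\mathbf U),\boldsymbol\varepsilon(\bar{\mathbf V}))_{\mathcal O} + s^2(\mathbf U,\bar{\mathbf V})_{\mathcal O}\bigr| \leq \triple{\mathbf U}_{|s|,\mathcal O}\,\triple{\mathbf V}_{|s|,\mathcal O},
\]
where the final inequality is Cauchy--Schwarz applied separately to the symmetric positive stiffness-induced pairing and to the weighted $L^2$-term.

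It then remains to produce $\mathbf V$ with $\triple{\mathbf V}_{|s|,\mathcal O}^2 \lesssim (|s|/\underline\sigma)\|\boldsymbol\eta\|_{1/2,\partial\mathcal O}^2$. A standard $s$-independent $\boldsymbol H^1$-lifting only yields the weaker bound $(1+|s|^2)\|\boldsymbol\eta\|_{1/2,\partial\mathcal O}^2$, which is insufficient for large $|s|$; the $\sqrt{|s|}$ improvement requires an $s$-dependent boundary-layer extension that decays away from $\partial\mathcal O$ at rate $|s|$. Intrinsically this is captured by the variational characterization of the $s$-scaled trace space,
\[
\inf_{\gamma\mathbf V = \bar{\boldsymbol\eta}}\bigl(\|\nabla\mathbf V\|_{\mathcal O}^2 + |s|^2\|\mathbf V\|_{\mathcal O}^2\bigr) \leq C_{\partial\mathcal O}\max(1,|s|)\,\|\boldsymbol\eta\|_{1/2,\partial\mathcal O}^2,
\]
a standard scalar estimate (see, e.g., \cite{Sayas:2016}) whose scalar proof uses a Littlewood--Paley decomposition on $\partial\mathcal O$ and which transfers to the elastic setting through Korn's inequality together with the pointwise bound $\boldsymbol\sigma(\mathbf V):\boldsymbol\varepsilon(\bar{\mathbf V})\lesssim |\nabla\mathbf V|^2$ coming from the essential boundedness of $\mathbf C$. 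A short check verifies $\max(1,|s|)\leq |s|/\underline\sigma$ for every $s\in\mathbb C_+$.

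Combining the two steps would yield $|\langle\mathbf T\mathbf U,\boldsymbol\eta\rangle_{\partial\mathcal O}|\leq C_{\partial\mathcal O}\sqrt{|s|/\underline\sigma}\,\triple{\mathbf U}_{|s|,\mathcal O}\|\boldsymbol\eta\|_{1/2,\partial\mathcal O}$, and squaring after taking the supremum in $\boldsymbol\eta$ concludes the proof.
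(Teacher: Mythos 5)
Your proposal is correct and reconstructs precisely the argument the paper defers to by citing Sayas's Proposition 2.5.2 and Bamberger--Ha-Duong (the paper itself gives no proof, only the reference): dualize $\|\mathbf T\mathbf U\|_{-1/2,\partial\mathcal O}$, rewrite $\langle\mathbf T\mathbf U,\boldsymbol\eta\rangle_{\partial\mathcal O}$ via Betti's formula together with $\Delta^*\mathbf U = s^2\mathbf U$, apply Cauchy--Schwarz in the $\triple{\cdot}_{|s|,\mathcal O}$-norm, insert the $s$-adapted boundary-layer lifting, and observe $\max(1,|s|)\leq |s|/\underline\sigma$. One small imprecision worth fixing: transferring the scalar lifting estimate to $\triple{\mathbf V}_{|s|,\mathcal O}^2\lesssim\max(1,|s|)\|\boldsymbol\eta\|_{1/2,\partial\mathcal O}^2$ only requires the essential boundedness of $\mathbf C$, i.e., the pointwise bound $\boldsymbol\sigma(\mathbf V):\overline{\boldsymbol\varepsilon(\mathbf V)}\lesssim|\nabla\mathbf V|^2$; Korn's inequality controls the opposite comparison and plays no role here.
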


This result is a straightforward generalization of \cite[Proposition 2.5.2]{Sayas:2016}, which in turn relies on a result by Bamberger and Ha-Duong, \cite{BaHa:1986a}, regarding the existence of an optimal lifting of Dirichlet traces for the Laplace resolvent equation which can also be extented to the Navier-Lam\'e resolvent equation. Since the generalizations are straightforward, we will not show the proof of this result here and instead refer the reader to \cite{BaHa:1986a} and \cite{Sayas:2016} for the proofs of the Laplace counterparts.

The second lemma that we will need establishes the explicit dependence of the continuity of the single and double layer potentials with respect to the Laplace parameter $s$ and its real part $\sigma>0$.

\begin{lemma}
\label{lem:ContinuityBounds}
The following bounds for the single and double layer potential hold
\begin{align}
\label{eq:SContinuityBoundH1}
\|\mathcal S(s)\boldsymbol\Lambda\|_{1,\mathbb R^3\setminus\Gamma}\leq\,&C\frac{|s|}{\sigma\underline{\sigma}^2} \|\boldsymbol\Lambda\|_{-1/2} \\
\label{eq:SContinuityBoundEnergy}
\triple{\mathcal S(s)\boldsymbol\Lambda}_{|s|,\mathbb R^3\setminus\Gamma}\leq\,&C \frac{|s|}{\sigma\underline{\sigma}} \|\boldsymbol\Lambda\|_{-1/2} \\
\label{eq:DContinuityBoundH1}
\|\mathcal D(s)\boldsymbol\varPhi\|_{1,\mathbb R^3\setminus\Gamma}\leq\,&C\frac{|s|^{3/2}}{\sigma\underline{\sigma}^{3/2}} \|\boldsymbol\varPhi\|_{1/2} \\
\label{eq:DContinuityBoundEnergy}
\triple{\mathcal D(s)\boldsymbol\varPhi}_{|s|,\mathbb R^3\setminus\Gamma}\leq\,&C \frac{|s|^{3/2}}{\sigma\underline{\sigma}^{1/2}} \|\boldsymbol\varPhi\|_{1/2}
\end{align}
\end{lemma}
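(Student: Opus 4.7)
The plan is to exploit the transmission structure of the layer potentials directly. For the single layer I would set $\mathbf U := \mathcal S(s)\boldsymbol\Lambda$, which satisfies $-\Delta^*\mathbf U + \rho_+ s^2\mathbf U = \boldsymbol 0$ separately on $\Omega^-$ and $\Omega^+$, together with $\jump{\gamma\mathbf U}=\boldsymbol 0$ and $\jump{\mathbf T\mathbf U}=\boldsymbol\Lambda$ (so that $\gamma^-\mathbf U=\gamma^+\mathbf U =:\gamma\mathbf U$). Writing Betti's formula \eqref{eq:BettisFormula} on each side with the test function $\overline{\mathbf U}$ and adding the two identities, the volume terms coming from $\Delta^*\mathbf U$ collapse into $\rho_+ s^2\|\mathbf U\|^2$ and the boundary contributions reduce to the jump in the normal traction, yielding
\[
\langle\boldsymbol\Lambda,\overline{\gamma\mathcal S(s)\boldsymbol\Lambda}\rangle_\Gamma = (\boldsymbol\sigma(\mathbf U),\overline{\boldsymbol\varepsilon(\mathbf U)})_{\mathbb R^3\setminus\Gamma} + s^2\rho_+\|\mathbf U\|_{\mathbb R^3\setminus\Gamma}^2.
\]
Multiplying by $\overline s$ and taking real parts the right-hand side becomes exactly $\sigma\,\triple{\mathcal S(s)\boldsymbol\Lambda}^2_{|s|,\mathbb R^3\setminus\Gamma}$, while the left-hand side is bounded by $|s|\,\|\boldsymbol\Lambda\|_{-1/2}\,\|\gamma\mathcal S(s)\boldsymbol\Lambda\|_{1/2}$. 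Controlling the trace by the $\boldsymbol H^1$ norm and then applying the upper inequality of \eqref{eq:EnergyEquivalence} (which costs a factor $1/\underline{\sigma}$) lets me cancel one factor of $\triple{\mathcal S(s)\boldsymbol\Lambda}_{|s|}$ and obtain \eqref{eq:SContinuityBoundEnergy}. A further use of \eqref{eq:EnergyEquivalence} converts the energy bound into the $\boldsymbol H^1$ bound \eqref{eq:SContinuityBoundH1}, introducing the second factor $1/\underline{\sigma}$.

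For the double layer the two jumps exchange roles: with $\mathbf U:=\mathcal D(s)\boldsymbol\varPhi$ one has $\jump{\mathbf T\mathbf U}=\boldsymbol 0$ (so $\mathbf T^-\mathbf U=\mathbf T^+\mathbf U=:\mathbf T\mathbf U$) and $\jump{\gamma\mathbf U}=-\boldsymbol\varPhi$. The same Betti-based manipulation now gives
\[
-\langle\mathbf T\mathcal D(s)\boldsymbol\varPhi,\overline{\boldsymbol\varPhi}\rangle_\Gamma = (\boldsymbol\sigma(\mathbf U),\overline{\boldsymbol\varepsilon(\mathbf U)})_{\mathbb R^3\setminus\Gamma} + s^2\rho_+\|\mathbf U\|_{\mathbb R^3\setminus\Gamma}^2,
\]
so multiplying by $\overline s$ and taking real parts produces
\[
\sigma\,\triple{\mathcal D(s)\boldsymbol\varPhi}^2_{|s|,\mathbb R^3\setminus\Gamma}\leq |s|\,\|\mathbf T\mathcal D(s)\boldsymbol\varPhi\|_{-1/2}\,\|\boldsymbol\varPhi\|_{1/2}.
\]
The new ingredient is that I now need to control the normal trace rather than the Dirichlet trace, and this is precisely what Lemma \ref{lem:NormalTraceBound} (applied on each side of $\Gamma$) provides: $\|\mathbf T\mathcal D(s)\boldsymbol\varPhi\|_{-1/2}\lesssim(|s|/\underline{\sigma})^{1/2}\triple{\mathcal D(s)\boldsymbol\varPhi}_{|s|}$. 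Inserting this bound and cancelling one factor of $\triple{\mathcal D(s)\boldsymbol\varPhi}_{|s|}$ yields \eqref{eq:DContinuityBoundEnergy}; a final invocation of \eqref{eq:EnergyEquivalence} then gives \eqref{eq:DContinuityBoundH1}. The extra half-power of $|s|/\underline{\sigma}$ in the double-layer bounds relative to the single-layer ones is precisely the price paid for using the normal-trace estimate instead of the (essentially free) Dirichlet trace inequality.

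The main obstacle I anticipate is purely technical: justifying that Betti's formula and the resulting duality pairings genuinely make sense on the unbounded component $\Omega^+$. This is not a substantive issue because the kernel $\mathbf E(\boldsymbol x,\boldsymbol y;s)$ and its gradient carry exponential factors $\exp(-s|\boldsymbol x-\boldsymbol y|/c_{s,p})$ that decay at infinity for $s\in\mathbb C^+$, so both layer potentials belong to $\boldsymbol H^1_{\Delta^*}(\Omega^+)$ and Betti's formula applies. Once that is settled, careful bookkeeping of when the upper bound in \eqref{eq:EnergyEquivalence} is used and when Lemma \ref{lem:NormalTraceBound} is invoked reproduces the exact exponents of $|s|$, $\sigma$, and $\underline{\sigma}$ appearing in the four stated inequalities.
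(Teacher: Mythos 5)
Your proposal is correct and follows essentially the same route as the paper's proof: use Betti's formula on both sides of $\Gamma$ to write the real part of $\overline{s}$ times the relevant boundary pairing as $\sigma\triple{\cdot}^2_{|s|}$, bound the Dirichlet trace directly for $\mathcal S$ and use Lemma~\ref{lem:NormalTraceBound} for the Neumann trace of $\mathcal D$, then convert via the norm equivalence \eqref{eq:EnergyEquivalence}. The paper presents these steps more tersely (citing "an argument analogous to the one employed in Lemma~\ref{lem:StronglyEllipticA}"), whereas you spell out the Betti computation and the jump bookkeeping explicitly, but the ingredients, the order in which they are applied, and the resulting powers of $|s|$, $\sigma$, $\underline\sigma$ all coincide.
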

\begin{proof}
We start by considering $\boldsymbol\Lambda\in H^{-1/2}(\Gamma)$ and $\boldsymbol\varPhi\in H^{1/2}(\Gamma)$, and defining
\[
\mathbf U_{\mathcal S} : = \mathcal S(s)\boldsymbol\Lambda \qquad \text{ and } \qquad \mathbf U_{\mathcal D}:=\mathcal D(s)\boldsymbol\varPhi.
\]
It then follows from an argument analogous to the one employed in Lemma \ref{lem:StronglyEllipticA} that
{\small\[
\sigma\triple{\mathbf U_{\mathcal S}}_{|s|,\mathbb R^3\setminus\Gamma}^2 \leq |s|| \langle\overline{\boldsymbol\Lambda},\gamma\mathbf U_{\mathcal S}\rangle_\Gamma|\leq C|s|\|\boldsymbol\Lambda\|_{-1/2}\|\mathbf U_{\mathcal S}\|_{1,\mathbb R^3\setminus\Gamma}.
\]}

Making use the norm equivalence relations \eqref{eq:EnergyEquivalence}, the estimates \eqref{eq:SContinuityBoundH1} and \eqref{eq:SContinuityBoundEnergy} can be extracted from the above sequence of inequalities. Analogously it is possible to see that
{\small\begin{align*}
\sigma\triple{\mathbf U_{\mathcal D}}_{|s|,\mathbb R^3\setminus\Gamma}^2 \leq |s| |\langle\mathbf T \mathbf U_{\mathcal D},\overline{\boldsymbol\varPhi}\rangle_\Gamma|\leq C|s|\|\mathbf T \mathbf U_{\mathcal D}\|_{-1/2}\|\boldsymbol\varPhi\|_{1/2}\leq\,& C\frac{|s|^{3/2}}{\underline{\sigma}^{1/2}}\triple{\mathbf U_{\mathcal D}}_{|s|,\mathbb R^3\setminus\Gamma}  \|\boldsymbol\varPhi\|_{1/2},
\end{align*}}
from which \eqref{eq:DContinuityBoundEnergy} follows. One further application of \eqref{eq:EnergyEquivalence} on the left end of the sequence above leads to \eqref{eq:DContinuityBoundH1}.
\end{proof}
  
\begin{theorem}
Problem \eqref{eq:VariationalForm} is uniquely solvable and there exist constants $C_1>0$ and $C_2>0$ such that
\begin{align}
\label{eq:est1}
\left(\triple{\mathbf U^-}_{|s|,\Omega_-}^2 + \triple{\mathbf U^*}_{|s|,\mathbb R^3\setminus\Gamma}^2\right)^{1/2} \leq\,& C_1 \frac{|s|^2}{ \sigma\underline{\sigma}^2}\left(\|\mathbf F\|_{\Omega_-}^2\!\! + \|\mathbf T^+  \mathbf U^{inc}\|^2_{-1/2} + \|\gamma^+ \mathbf U^{inc}\|^2_{1/2}\right)^{1/2},\\
\label{eq:est2}
\left(\triple{\mathbf U^-}_{|s|,\Omega_-}^2\!\!\! +\|\boldsymbol\varPhi\|_{1/2,\Gamma}^2  + \|\boldsymbol\Lambda\|_{-1/2,\Gamma}^2\right)^{1/2} \leq\,& C_2 \frac{|s|^{5/2}}{\sigma\underline{\sigma}^{7/2}}\left(\|\mathbf F\|_{\Omega_-}^2\!\! + \|\mathbf T^+  \mathbf U^{inc}\|^2_{-1/2} + \|\gamma^+ \mathbf U^{inc}\|^2_{1/2}\right)^{1/2}.
\end{align}
\end{theorem}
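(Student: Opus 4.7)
The plan is to exploit the equivalence between the nonlocal system \eqref{eq:VariationalForm} and the transmission problem \eqref{eq:VariationalTransmission}, recast through the shifted unknown $\mathbf U_0^* = \mathbf U^* - \mathbf U^{inc}$ as \eqref{eq:CompactForm} on the closed subspace $\mathbf H_0$. Since Lemma \ref{lem:StronglyEllipticA} shows $A(\cdot,\cdot)$ to be strongly elliptic in the natural energy norm, an application of the Lax-Milgram lemma in $\mathbf H_0$ equipped with the squared norm $\triple{\mathbf V}_{|s|,\Omega_-}^2 + \triple{\mathbf W}_{|s|,\mathbb R^3\setminus\Gamma}^2$ produces a unique $(\mathbf U^-,\mathbf U_0^*)$; the pair $\mathbf U^* := \mathbf U_0^* + \mathbf U^{inc}$ is then used to recover the boundary densities through the jump identities $\boldsymbol\Lambda = \jump{\mathbf T\mathbf U^*}$ and $\boldsymbol\varPhi = \jump{\gamma\mathbf U^*}$ obtained in the proof of the preceding lemma.

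For the first estimate \eqref{eq:est1} I would combine the ellipticity identity of Lemma \ref{lem:StronglyEllipticA} with \eqref{eq:CompactForm} to obtain $\sigma\bigl(\triple{\mathbf U^-}_{|s|}^2 + \triple{\mathbf U_0^*}_{|s|}^2\bigr) \le |s|\,|L((\overline{\mathbf U^-},\overline{\mathbf U_0^*}))|$, and then bound each of the three pieces of $L$ separately. The volume term $(\mathbf F,\overline{\mathbf U^-})_{\Omega_-}$ is controlled by $\|\mathbf F\|_{\Omega_-}\triple{\mathbf U^-}_{|s|}/|s|$ using the $s\sqrt{\rho_-}$ weight hidden in the energy norm; the boundary term $\langle\mathbf T^+\mathbf U^{inc},\gamma^-\overline{\mathbf U^-}\rangle_\Gamma$ is bounded by $\|\mathbf T^+\mathbf U^{inc}\|_{-1/2}\triple{\mathbf U^-}_{|s|}/\underline\sigma$ through the standard trace inequality composed with the norm equivalence \eqref{eq:EnergyEquivalence}; and the lifting term $|A((\mathbf 0,\mathbf U^{inc}),(\overline{\mathbf U^-},\overline{\mathbf U_0^*}))|$ is bounded by $\triple{\mathbf U^{inc}}_{|s|,\Omega_+}\triple{\mathbf U_0^*}_{|s|,\mathbb R^3\setminus\Gamma}$ via Cauchy-Schwarz for the Hermitian form associated with $A$. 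The missing ingredient is a Bamberger--Ha-Duong-type Dirichlet lifting $\mathbf U^{inc}\in\boldsymbol H^1(\Omega_+)$, extended by zero to $\Omega_-$, satisfying $\triple{\mathbf U^{inc}}_{|s|,\Omega_+} \lesssim (|s|/\underline\sigma)^{1/2}\|\gamma^+\mathbf U^{inc}\|_{1/2}$; this is the Dirichlet counterpart of Lemma \ref{lem:NormalTraceBound} and is proved in \cite{BaHa:1986a} for the Helmholtz resolvent, the extension to the Navier-Lam\'e resolvent being immediate. After inserting these bounds, applying Young's inequality and using $\underline\sigma \le 1$ together with $|s|/\underline\sigma\ge 1$ to absorb subdominant exponents into the universal coefficient $|s|^2/(\sigma\underline\sigma^2)$, one obtains the bound on $\triple{\mathbf U^-}_{|s|}^2 + \triple{\mathbf U_0^*}_{|s|}^2$; adding back the Dirichlet estimate on $\triple{\mathbf U^{inc}}_{|s|,\Omega_+}$ yields \eqref{eq:est1}.

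For \eqref{eq:est2} I would use the fact that $\mathbf U^*$ satisfies the homogeneous resolvent equation \eqref{eq:AllSpace} on both sides of $\Gamma$. The trace theorem together with \eqref{eq:EnergyEquivalence} then yields $\|\boldsymbol\varPhi\|_{1/2,\Gamma} = \|\gamma^-\mathbf U^* - \gamma^+\mathbf U^*\|_{1/2,\Gamma} \lesssim \triple{\mathbf U^*}_{|s|,\mathbb R^3\setminus\Gamma}/\underline\sigma$, while Lemma \ref{lem:NormalTraceBound} applied from each side of $\Gamma$ produces $\|\boldsymbol\Lambda\|_{-1/2,\Gamma} \lesssim (|s|/\underline\sigma)^{1/2}\triple{\mathbf U^*}_{|s|,\mathbb R^3\setminus\Gamma}$. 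Substituting \eqref{eq:est1} and collapsing the various powers of $|s|/\underline\sigma$ into a single worst-case exponent (valid because $\underline\sigma\le\sigma\le|s|$ and $\underline\sigma\le 1$) gives the announced prefactor $|s|^{5/2}/(\sigma\underline\sigma^{7/2})$.

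The main technical obstacle is the continuity estimate for the term $A((\mathbf 0,\mathbf U^{inc}),\cdot)$ in $L$, which hinges on the availability of a Dirichlet lifting of $\gamma^+\mathbf U^{inc}$ with the sharp $(|s|/\underline\sigma)^{1/2}$ weight in the energy norm; this is what dictates the $|s|$- and $\underline\sigma$-exponents on the right-hand side of \eqref{eq:est1}. Once that is in place, the remainder is careful bookkeeping, with the extra $1/\underline\sigma^{3/2}$ in the coefficient of \eqref{eq:est2} relative to \eqref{eq:est1} accounting for the loss incurred when recovering the traces and tractions on $\Gamma$ through Lemma \ref{lem:NormalTraceBound} and the norm equivalence \eqref{eq:EnergyEquivalence}.
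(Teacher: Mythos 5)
Your proposal is correct and follows essentially the same route as the paper: reduce to the transmission problem \eqref{eq:CompactForm}, combine Lemma \ref{lem:StronglyEllipticA} with Lax--Milgram for unique solvability, bound the linear form to obtain the energy estimate on $(\mathbf U^-,\mathbf U_0^*)$, and then recover $\boldsymbol\varPhi$ and $\boldsymbol\Lambda$ from the jump identities via the trace theorem and Lemma \ref{lem:NormalTraceBound}. The one point where you diverge is minor and slightly over-engineered: you invoke a sharp Bamberger--Ha-Duong Dirichlet lifting with weight $(|s|/\underline\sigma)^{1/2}$ and single it out as the main obstacle, whereas the paper simply takes a standard $\boldsymbol H^1$ extension of $\gamma^+\mathbf U^{inc}$ and applies the already-established norm equivalence \eqref{eq:EnergyEquivalence}, accepting the coarser factor $|s|/\underline\sigma$; since $\underline\sigma\le 1$ and $\underline\sigma\le|s|$, both factors are dominated by $|s|/\underline\sigma^2$, which together with the prefactor $|s|/\sigma$ yields exactly $|s|^2/(\sigma\underline\sigma^2)$, so the optimal lifting is not what dictates the stated exponents.
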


\begin{proof}
We start by noticing that, since $\mathbf U_0^*:=\mathbf U^*-\mathbf U^{inc}$, the pair $(\mathbf U_0^-,\mathbf U_0^* )$ is such that 
\[
\gamma^-\mathbf U^- + \gamma^+\mathbf U_0^* = \gamma^-\mathbf U^- + \gamma^+\left(\mathbf U^* -\mathbf U^{inc}\right)=\,\boldsymbol 0,
\]
and therefore it satisfies problem \eqref{eq:CompactForm}. Hence, from Lemma \ref{lem:StronglyEllipticA}, and fact that
\[
|A\left((\mathbf 0,\mathbf U^{inc} ),(\mathbf U^-,\mathbf U_0^*)\right)|
\leq \triple{\mathbf U^{inc}}_{|s|,\mathbb R^3\setminus\Gamma}\,
\triple{\mathbf U^*_0}_{|s|,\mathbb R^3\setminus\Gamma}
\leq C \frac{|s|}{\underline{\sigma}}\, \|\gamma^+\mathbf U^{inc} \|_{1/2} \triple{\mathbf U^*_0}_{|s|,\mathbb R^3\setminus\Gamma},
\]
we see that
{\small \begin{alignat*}{6}
\sigma \Big(\!\triple{\mathbf U^-}_{|s|,\Omega_-}^2\!\!\! + \triple{\mathbf U_0^*}_{|s|,\mathbb R^3\setminus\Gamma}^2\!\Big)\!\leq\,& |s|\! \left(\!
(\mathbf F,\mathbf U^-)_{\Omega_-}\!\!  + \langle\mathbf T^+\mathbf U^{inc}\!\!,\gamma^-\mathbf  U^-\rangle_\Gamma 
- A\big((\mathbf 0,\mathbf U^{inc} ),(\overline{\mathbf U^-},\overline{\mathbf U_0^*})\big)\right)\\
\leq & C\frac{|s|^2}{\underline{\sigma}^2}
\!\left(\!\|\mathbf F\|_{\Omega_-}^2\!\!\! +\! \|\mathbf T^+  \mathbf U^{inc}\|^2_{-1/2}\! +\! \|\gamma^+ \mathbf U^{inc}\|^2_{1/2}\!\right)^{1/2}\!\!\left(\!\triple{\mathbf U^-}_{|s|,\Omega_-}^2\!\!\! +\! \triple{\mathbf U_0^*}_{|s|,\mathbb R^3\setminus\Gamma}^2\!\right)^{1/2}, 
\end{alignat*} }
from which we obtain 
\begin{equation}
\label{eq:aux001}
 \left(\triple{\mathbf U^-}_{|s|,\Omega_-}^2 + \triple{\mathbf U_0^*}_{|s|,\mathbb R^3\setminus\Gamma}^2\right)^{1/2} \leq  \frac{|s|^2}{\sigma\, \underline{\sigma}^2}
\left(\|\mathbf F\|_{\Omega_-}^2\!\! + \|\mathbf T^+ \mathbf U^{inc}\|^2_{-1/2} + \|\gamma^+ \mathbf U^{inc}\|^2_{1/2}\right)^{1/2} . 
\end{equation}
Then, for the solution $(\mathbf U^-, \mathbf U^*)$, we prove

{\small \begin{align*}
\triple{\mathbf U^-}_{|s|,\Omega_-}^2 \!\!\! + \triple{\mathbf U^*}_{|s|,\mathbb R^3\setminus\Gamma}^2 =\,& \triple{\mathbf U^-}_{|s|,\Omega_-}^2 \!\!\! + \triple{ \mathbf U^*_0 + \mathbf U^{inc}}_{|s|,\mathbb R^3\setminus\Gamma}^2  && \\
\lesssim\,& \triple{\mathbf U^-}_{|s|,\Omega_-}^2 \!\!\! + \triple{\mathbf U^*_0}_{|s|,\mathbb R^3\setminus\Gamma}^2 + \triple{\mathbf U^{inc}}_{|s|,\mathbb R^3\setminus\Gamma}^2 && \\
\lesssim\,& \triple{\mathbf U^-}_{|s|,\Omega_-}^2 \!\!\! + \triple{\mathbf U^*_0}_{|s|,\mathbb R^3\setminus\Gamma}^2 + \frac{|s|^2}{\underline{\sigma}^2}\|\mathbf U^{inc}\|_{1,\Omega_+}^2 && \text{(By \eqref{eq:EnergyEquivalence})}\\
\lesssim\,& \triple{\mathbf U^-}_{|s|,\Omega_-}^2 \!\!\!+ \triple{\mathbf U^*_0}_{|s|,\mathbb R^3\setminus\Gamma}^2 + \frac{|s|^2}{\underline{\sigma}^2}\|\gamma^+\mathbf U^{inc}\|_{1/2}^2 && \\
\lesssim\,& \frac{|s|^4}{\sigma^2\underline{\sigma}^4}\left(\|\mathbf F\|_{\Omega_-}^2 \!\!\! + \|\mathbf T^+\mathbf U^{inc}\|_{-1/2}^2 + \|\gamma^+\mathbf U^{inc}\|_{1/2}^2\right) + \frac{|s|^2}{\underline{\sigma}^2}\|\gamma^+\mathbf U^{inc}\|_{1/2}^2  && \text{(By \eqref{eq:aux001})} \\
\lesssim\,& \frac{|s|^4}{\sigma^2\underline{\sigma}^4}\left(\|\mathbf F\|_{\Omega_-}^2 \!\!\! + \|\mathbf T^+\mathbf U^{inc}\|_{-1/2}^2 + \|\gamma^+\mathbf U^{inc}\|_{1/2}^2\right), &&
\end{align*} }
which leads to \eqref{eq:est1}.

Now we recall that $\boldsymbol\phi = \jump{\gamma\mathbf U^*}$ and therefore
\[
\|\boldsymbol\varPhi\|_{1/2,\Gamma}^2 = \|\jump{\gamma\mathbf U^*}\|_{1/2,\Gamma}^2 \leq C \|\mathbf U^*\|_{1,\mathbb R^3\setminus\Gamma}^2\leq \frac{C}{\underline{\sigma}^2}\triple{\mathbf U^*}_{|s|,\mathbb R^3\setminus\Gamma}^2.
\]
Analogously, since $\boldsymbol\Lambda = \jump{\mathbf T\mathbf U^*}$, it follows from Lemma \ref{lem:NormalTraceBound}  that
\begin{align}
\nonumber
\|\boldsymbol\Lambda\|_{-1/2,\Gamma}^2 = \|\jump{\mathbf T\mathbf U^*}\|_{-1/2,\Gamma}^2 \leq\,& C_{\Gamma} \frac{|s|}{\underline{\sigma}} \triple{\mathbf U^*}_{|s|,\mathbb R^3\setminus\Gamma}^2.
\end{align}

Using the last two inequalities together with \eqref{eq:est1} we can conclude that
\begin{align*}
\triple{\mathbf U^-}_{|s|,\Omega_-}^2\!\!\! + \|\boldsymbol\varPhi\|_{1/2,\Gamma}^2  + \|\boldsymbol\Lambda\|_{-1/2,\Gamma}^2 \leq  &C \frac{|s|}{\underline{\sigma}^3}\left(\!\triple{\mathbf U^-}_{|s|,\Omega_-}^2\!\!\! + \triple{\mathbf U^*}_{|s|,\mathbb R^3\setminus\Gamma}^2\!\right) \\
\leq  & C \frac{|s|^5}{\sigma^2\underline{\sigma}^7}\left(\|\mathbf F\|_{\Omega_-}^2\!\! + \|\mathbf  T^+ \mathbf U^{inc}\|^2_{-1/2} + \|\gamma^+ \mathbf U^{inc}\|^2_{1/2}\right)\!.
\end{align*}
From which \eqref{eq:est2} follows readily.
\end{proof}
\subsection{Variational formulation of the alternative problem}
We start by defining the operators
\begin{align}
\nonumber
\mathrm A_{\Omega_-}(s) : \boldsymbol H^1(\Omega_-) &\longrightarrow \left(\boldsymbol H^1(\Omega_-)\right)^\prime \\
\nonumber
\mathbf U &\longmapsto (\boldsymbol\sigma(\mathbf U),\boldsymbol\epsilon(\mathbf V))_{\Omega_-}\!\! + s^2(\rho_-\mathbf U^-,\mathbf V)_{\Omega_-}, \\[1ex]
\nonumber
\mathrm B(s) : \boldsymbol H^{-1/2}(\Gamma)\times\boldsymbol H^{1/2}(\Gamma) &\longrightarrow \left(\boldsymbol H^{-1/2}(\Gamma)\times\boldsymbol H^{1/2}(\Gamma)\right)^\prime \\
\nonumber
(\boldsymbol\Lambda,\boldsymbol\varPhi) &\longmapsto \left\langle\begin{pmatrix}\mathcal V(s) & -s^{-1}\left(\tfrac{1}{2}\mathcal I + \mathcal K(s)\right)\\
\left(\tfrac{1}{2}\mathcal I + \mathcal K(s)\right)^\prime & s^{-1}\mathcal W(s) \end{pmatrix} \begin{pmatrix} \boldsymbol\Lambda \\ 
\boldsymbol\varPhi\end{pmatrix}, \begin{pmatrix} \boldsymbol\mu \\ 
\boldsymbol\eta \end{pmatrix}
\right\rangle_\Gamma \\[1ex]
\nonumber
\mathrm B_0(s) : \boldsymbol H^{-1/2}(\Gamma)\times\boldsymbol H^{1/2}(\Gamma) &\longrightarrow \left(\boldsymbol H^{-1/2}(\Gamma)\times\boldsymbol H^{1/2}(\Gamma)\right)^\prime \\
\label{eq:OperatorB0}
(\boldsymbol\Lambda,\boldsymbol\varPhi) &\longmapsto \left\langle\begin{pmatrix} s\mathcal V(s) & -\mathcal K(s)\\ 
\mathcal K(s)^\prime & s^{-1}\mathcal W(s) \end{pmatrix} \begin{pmatrix} \boldsymbol\Lambda \\ 
\boldsymbol\varPhi\end{pmatrix}, \begin{pmatrix} \boldsymbol\mu \\ 
\boldsymbol\eta \end{pmatrix}
\right\rangle_\Gamma \\[1ex]
\nonumber
\mathcal A(s) : \boldsymbol H^1(\Omega_-)\times\boldsymbol H^{-1/2}(\Gamma)\times\boldsymbol H^{1/2}(\Gamma) &\longrightarrow \left(\boldsymbol H^1(\Omega_-)\times\boldsymbol H^{-1/2}(\Gamma)\times\boldsymbol H^{1/2}(\Gamma)\right)^\prime \\
\nonumber
(\mathbf U,\boldsymbol\Lambda,\boldsymbol\varPhi) &\longmapsto \left\langle \begin{pmatrix}\mathrm A_{\Omega_-}(s) & -[(\gamma^-)^\prime ,\, 0\,] \\ & \\ [\gamma^-, 0]^\top & \mathrm B(s)\end{pmatrix} \begin{pmatrix}\mathbf U \\ \boldsymbol\Lambda \\ \boldsymbol\varPhi\end{pmatrix},\begin{pmatrix}\mathbf V \\ \boldsymbol\eta \\ \boldsymbol\mu\end{pmatrix} \right\rangle_\Gamma
\end{align}
where $(\mathbf V,\boldsymbol\mu,\boldsymbol\eta)\in \boldsymbol H^1(\Omega_-)\times\boldsymbol H^{-1/2}(\Gamma)\times\boldsymbol H^{1/2}(\Gamma)$ are arbitrary test functions.

With this notation we can write the variational formulation of \eqref{eq:StrongNonLocal2} as the problem of finding $(\mathbf U^-,\boldsymbol\Lambda,\boldsymbol\varPhi)\in \boldsymbol H^1(\Omega_-)\times\boldsymbol H^{-1/2}(\Gamma)\times\boldsymbol H^{1/2}(\Gamma)$, such that for every $(\mathbf V,\boldsymbol\mu,\boldsymbol\eta)\in \boldsymbol H^1(\Omega_-)\times\boldsymbol H^{-1/2}(\Gamma)\times\boldsymbol H^{1/2}(\Gamma)$
\begin{equation}\label{eq:Variational Form2}
\langle\mathcal A(\mathbf U^-,\boldsymbol\Lambda,\boldsymbol\varPhi),(\mathbf V,\boldsymbol\mu,\boldsymbol\eta)\rangle = \langle \mathbf d_1,\mathbf d_2,\mathbf d_3),(\mathbf V,\boldsymbol\mu,\boldsymbol\eta) \rangle, 
\end{equation}
where $\langle \cdot, \cdot\rangle$ denotes the duality paring between \[
\boldsymbol H^1(\Omega_-)\times\boldsymbol H^{-1/2}(\Gamma)\times\boldsymbol H^{1/2}(\Gamma) \quad \text{ and } \quad \left(\boldsymbol H^1(\Omega_-)\times\boldsymbol H^{-1/2}(\Gamma)\times\boldsymbol H^{1/2}(\Gamma)\right)^\prime,
\]
and the data is given by
\[
(\mathbf d_1,\mathbf d_2,\mathbf d_3) = (\mathbf F + (\gamma^-)^\prime \mathbf T^+\mathbf U^{inc},\gamma^+\mathbf U^{inc},\boldsymbol 0)\in \left(\boldsymbol H^1(\Omega_-)\times\boldsymbol H^{-1/2}(\Gamma)\times\boldsymbol H^{1/2}(\Gamma)\right)^\prime.
\]
To prove the well-posedness of this problem, we will need first prove the following 
\begin{lemma}\label{lem:ElliticityB0}
The operator $\mathrm B_0(s)$ defined in \eqref{eq:OperatorB0} is strongly elliptic in the sense that there exists $C>0$ such that
\begin{equation}
\label{eq:ElliticityB0}
\|\boldsymbol\varPhi\|_{1/2,\Gamma}^2 + 
 \|\boldsymbol\Lambda\|_{-1/2,\Gamma}^2 \leq C\frac{|s|^2}{\sigma\underline{\sigma}^2} \mathrm{Re}\,\left[\langle\mathrm B_0(s)(\boldsymbol\Lambda,\boldsymbol\varPhi),\overline{(\boldsymbol\Lambda,\boldsymbol\varPhi)}\rangle\right].
\end{equation}
\end{lemma}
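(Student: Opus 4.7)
The natural strategy is to realize $\mathrm B_0(s)$ as the boundary pairing generated by the transmission energy of an auxiliary field. Given $(\boldsymbol\Lambda,\boldsymbol\varPhi) \in \boldsymbol H^{-1/2}(\Gamma) \times \boldsymbol H^{1/2}(\Gamma)$, I would introduce
\[
\mathbf U^* := \mathcal S(s)\boldsymbol\Lambda - s^{-1}\mathcal D(s)\boldsymbol\varPhi \in \boldsymbol H^1(\mathbb R^3\setminus\Gamma),
\]
which by construction solves the homogeneous resolvent equation $-\Delta^*_+\mathbf U^* + s^2\rho_+\mathbf U^* = \boldsymbol 0$ on each side of $\Gamma$. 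The jump relations \eqref{eq:JumpConditions} together with the definitions \eqref{eq:Operators} immediately give
\[
\jump{\gamma\mathbf U^*} = s^{-1}\boldsymbol\varPhi, \qquad \jump{\mathbf T\mathbf U^*} = \boldsymbol\Lambda,
\]
\[
\ave{\gamma\mathbf U^*} = \mathcal V\boldsymbol\Lambda - s^{-1}\mathcal K(s)\boldsymbol\varPhi, \qquad \ave{\mathbf T\mathbf U^*} = \mathcal K'(s)\boldsymbol\Lambda + s^{-1}\mathcal W(s)\boldsymbol\varPhi,
\]
so that the four blocks of $\mathrm B_0(s)$ appear naturally in terms of the traces of $\mathbf U^*$.

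\textbf{Master identity from Betti's formula.} Following the same argument used in Lemma \ref{lem:StronglyEllipticA}, I would apply Betti's formula \eqref{eq:BettisFormula} in $\Omega^-$ and in $\Omega^+$ with test function $\overline{\mathbf U^*}$, add the two resulting identities, multiply by $\bar s$, and take the real part. Since $\mathbf C$ is symmetric, the volume contribution collapses to $\sigma\triple{\mathbf U^*}^2_{|s|,\mathbb R^3\setminus\Gamma}$. The boundary contribution can be reorganized via the elementary algebraic identity
\[
\langle\mathbf T^-\mathbf U^*,\overline{\gamma^-\mathbf U^*}\rangle_\Gamma - \langle\mathbf T^+\mathbf U^*,\overline{\gamma^+\mathbf U^*}\rangle_\Gamma = \langle\ave{\mathbf T\mathbf U^*},\overline{\jump{\gamma\mathbf U^*}}\rangle_\Gamma + \langle\jump{\mathbf T\mathbf U^*},\overline{\ave{\gamma\mathbf U^*}}\rangle_\Gamma,
\]
so that inserting the jumps and averages from the setup and then multiplying by $\bar s$ produces four boundary pairings. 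Using (i) the symmetry of the single layer and hypersingular kernels, (ii) the duality $\langle\mathcal K'(s)\cdot,\cdot\rangle_\Gamma = \langle\cdot,\mathcal K(s)\cdot\rangle_\Gamma$, and (iii) the conjugation rule $\overline{\mathcal V(s)\boldsymbol\Lambda} = \mathcal V(\bar s)\overline{\boldsymbol\Lambda}$ (and the analogous rules for $\mathcal K,\mathcal K',\mathcal W$), the real part of the resulting expression is recognized as precisely $\mathrm{Re}\,[\langle\mathrm B_0(s)(\boldsymbol\Lambda,\boldsymbol\varPhi), \overline{(\boldsymbol\Lambda,\boldsymbol\varPhi)}\rangle]$. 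This yields the master identity
\[
\sigma\,\triple{\mathbf U^*}^2_{|s|,\mathbb R^3\setminus\Gamma} = \mathrm{Re}\,\bigl[\langle\mathrm B_0(s)(\boldsymbol\Lambda,\boldsymbol\varPhi),\overline{(\boldsymbol\Lambda,\boldsymbol\varPhi)}\rangle\bigr].
\]
This is the delicate step and the principal obstacle of the proof: the $s^{-1}$ scaling of the double layer in the ansatz is chosen precisely so that the $|s|^{-2}$ and $\bar s$ weights emerging from Betti combine into the clean $s^{-1}$ and $s$ diagonal entries of $\mathrm B_0(s)$, while the cross terms involving $\mathcal K$ and $\mathcal K'$ match only after invoking the conjugation/adjoint identities above.

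\textbf{Bounding the densities and closing.} It remains to control $\|\boldsymbol\varPhi\|_{1/2,\Gamma}$ and $\|\boldsymbol\Lambda\|_{-1/2,\Gamma}$ in terms of $\triple{\mathbf U^*}_{|s|,\mathbb R^3\setminus\Gamma}$. Since $\boldsymbol\varPhi = s\jump{\gamma\mathbf U^*}$, continuity of the trace operator together with the norm equivalence \eqref{eq:EnergyEquivalence} yields
\[
\|\boldsymbol\varPhi\|_{1/2,\Gamma}^2 \lesssim |s|^2 \|\mathbf U^*\|_{1,\mathbb R^3\setminus\Gamma}^2 \lesssim \frac{|s|^2}{\underline\sigma^2}\triple{\mathbf U^*}_{|s|,\mathbb R^3\setminus\Gamma}^2,
\]
and since $\boldsymbol\Lambda = \jump{\mathbf T\mathbf U^*}$, Lemma \ref{lem:NormalTraceBound} applied on both sides of $\Gamma$ gives
\[
\|\boldsymbol\Lambda\|_{-1/2,\Gamma}^2 \lesssim \frac{|s|}{\underline\sigma}\triple{\mathbf U^*}_{|s|,\mathbb R^3\setminus\Gamma}^2.
\]
Because $|s|\geq \sigma \geq \underline\sigma$ the first bound dominates; adding the two and invoking the master identity produces \eqref{eq:ElliticityB0}.
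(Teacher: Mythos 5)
Your proposal is correct and follows essentially the same path as the paper's proof: introduce the auxiliary potential $\mathbf U^* = \mathcal S(s)\boldsymbol\Lambda - s^{-1}\mathcal D(s)\boldsymbol\varPhi$, identify the $\mathrm B_0(s)$ pairing with the boundary terms of Betti's formula via the jump/average algebraic identity to obtain the energy identity $\mathrm{Re}\,[\langle\mathrm B_0(s)(\boldsymbol\Lambda,\boldsymbol\varPhi),\overline{(\boldsymbol\Lambda,\boldsymbol\varPhi)}\rangle]=\sigma\triple{\mathbf U^*}^2_{|s|,\mathbb R^3\setminus\Gamma}$, and then bound the densities by the trace theorem and Lemma~\ref{lem:NormalTraceBound}. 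The appeal to the conjugation rule $\overline{\mathcal V(s)\boldsymbol\Lambda}=\mathcal V(\bar s)\overline{\boldsymbol\Lambda}$ and kernel symmetry is unnecessary—bilinearity of the duality pairing together with $\mathrm{Re}\,\bar z = \mathrm{Re}\,z$ already handles the $s$ vs.\ $\bar s$ bookkeeping, as in the paper's direct computation.
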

\begin{proof}
Consider the equation \eqref{eq:AllSpace} that we repeat here for convenience
\[
-\Delta^*_+\mathbf U +s^2\rho_+\mathbf U = \boldsymbol 0 \qquad \text{ in } \mathbb R^3\setminus \Gamma,
\]
and define
\[
\mathbf U = \mathcal S(s)\boldsymbol\Lambda -s^{-1}\mathcal D(s)\boldsymbol\varPhi \qquad \text{ in } \mathbb R^3\setminus\Gamma.
\]
From its definition, it follows that $\mathbf U$ satisfies
\begin{alignat}{8}
\label{eq:LambdaAndPhi}
\jump{\gamma\mathbf U} : =\,& s^{-1}\boldsymbol\varPhi \qquad\qquad&& \jump{\mathbf T\mathbf U} : =\,&& \boldsymbol\Lambda \\
\nonumber
\ave{\gamma\mathbf U} : = \,&  \mathcal V(s)\boldsymbol \Lambda - s^{-1}\mathcal K(s)\boldsymbol\varPhi \qquad\qquad&& \ave{\mathbf T\mathbf U} : = \,&&  \mathcal K^\prime(s)\boldsymbol \Lambda + s^{-1}\mathcal W(s)\boldsymbol\varPhi.
\end{alignat}
Using these identities we can then compute
\begin{align}
\nonumber
\mathrm{Re}\,\left[\langle\mathrm B_0(s)(\boldsymbol\Lambda,\boldsymbol\varPhi),\overline{(\boldsymbol\Lambda,\boldsymbol\varPhi)}\rangle\right] =\,& \mathrm{Re}\,\left[\langle (s\ave{\gamma\mathbf U},\ave{\mathbf T\mathbf U} ),\overline{(\jump{\mathbf T\mathbf U},s\jump{\gamma\mathbf U})}\rangle\right] &\\
\nonumber
=\,& \mathrm{Re}\,\left[\langle s\ave{\gamma\mathbf U},\overline{\jump{\mathbf T\mathbf U}}\rangle_\Gamma + \langle \ave{\mathbf T\mathbf U} ,\overline{s\jump{\gamma\mathbf U})}\rangle_\Gamma\right] &\\
\nonumber
=\,& \mathrm{Re}\,\left[\langle s\gamma^-\mathbf U,\overline{\mathbf T^-\mathbf U}\rangle_\Gamma - \langle s\gamma^+\mathbf U,\overline{\mathbf T^+\mathbf U}\rangle_\Gamma\right] &\\
\nonumber
=\,& \mathrm{Re}\,\left[s(\mathbf U,\overline{\Delta^*\mathbf U})_{\mathbb R^3\setminus\Gamma}+s(\boldsymbol\varepsilon(\mathbf U),\overline{\boldsymbol\sigma(\mathbf U)})_{\mathbb R^3\setminus\Gamma}\right] \qquad& \footnotesize{\text{(From \eqref{eq:BettisFormula})}}\\
\nonumber
=\,&\mathrm{Re}\,\left[\bar{s}(s\rho_+\mathbf U,\overline{s\mathbf U})_{\mathbb R^3\setminus\Gamma}+ s(\boldsymbol\varepsilon(\mathbf U),\overline{\boldsymbol\sigma(\mathbf U)})_{\mathbb R^3\setminus\Gamma}\right] \qquad& \footnotesize{\text{(From \eqref{eq:AllSpace})}}\\
\label{eq:EnergyEquality}
=\,& \sigma \triple{\mathbf U}_{|s|,\mathbb R^3\setminus\Gamma}^2. &
\end{align}

We then use this result combined with \eqref{eq:LambdaAndPhi}, the trace theorem, and \eqref{eq:EnergyEquivalence}  as follows
\[
\|\boldsymbol\varPhi\|_{1/2}^2 = \|s\jump{\gamma\mathbf U}\|_{1/2}^2 \leq c|s|^2\|\mathbf U\|_{\mathbb R^3\setminus\Gamma}^2 \leq c\alpha_0 \frac{|s|^2}{\underline{\sigma}^2}\triple{\mathbf U}_{|s|,\mathbb R^3\setminus\Gamma}^2 = c\alpha_0 \frac{|s|^2}{\sigma\underline{\sigma}^2}\mathrm{Re}\left[\langle\mathrm B_0(s)(\boldsymbol\Lambda,\boldsymbol\varPhi),\overline{(\boldsymbol\Lambda,\boldsymbol\varPhi)}\rangle\right].
\]
Analogously, we use the estimate from Lemma \ref{lem:NormalTraceBound}, the identity \eqref{eq:EnergyEquality}, and the fact that if $s\in\mathbb C_+$ it follows that $1\leq |s|/\underline{\sigma}$ to bound
\[
\|\boldsymbol\Lambda\|_{-1/2}^2 = \|\jump{\mathbf T\mathbf U}\|_{-1/2}^2 \leq c \frac{|s|}{\underline{\sigma}} \triple{\mathbf U}_{|s|,\mathbb R^3\setminus\Gamma}^2\leq c \frac{|s|^2}{\sigma\underline{\sigma}^2} \mathrm{Re}\left[\langle\mathrm B_0(s)(\boldsymbol\Lambda,\boldsymbol\varPhi),\overline{(\boldsymbol\Lambda,\boldsymbol\varPhi)}\rangle\right].
\]
From the two sequences of inequalities above we then conclude that
\[
\|\boldsymbol\Lambda\|_{-1/2}^2 + \|\boldsymbol\varPhi\|_{1/2}^2 \leq C\frac{|s|^2}{\sigma\underline{\sigma}^2} \mathrm{Re}\left[\langle\mathrm B_0(s)(\boldsymbol\Lambda,\boldsymbol\varPhi),\overline{(\boldsymbol\Lambda,\boldsymbol\varPhi)}\rangle\right],
\]
which concludes the proof.
\end{proof}

With the aid of this result, it is possible then to show that the operator $\mathcal A$ from the variational problem \eqref{eq:Variational Form2} is strongly elliptic.

\begin{theorem}
The problem \eqref{eq:Variational Form2} is uniquely solvable. Moreover, there exist constants $C_1>0$ and $C_2>0$ such that
{\small \begin{align}
\label{eq:LStabilityBound}
\left(\triple{\mathbf U}_{|s|,\Omega_-}^2 + \|\boldsymbol\Lambda\|_{-1/2}^2 + \|\boldsymbol\varPhi\|_{1/2}^2\right)^{1/2} \leq&\, C_1 \frac{|s|^{3}}{\sigma\underline{\sigma}^3} \left(\|\mathbf F\|_{\Omega_-}^2 + \|\mathbf T^+\mathbf U^{inc}\|_{-1/2}^2 + \|\gamma^+\mathbf U^{inc}\|_{1/2}^2\right)^{1/2}, \\
\label{eq:LUStabilityBound}
\left(\triple{\mathbf U}_{|s|,\Omega_-}^2 + \triple{\mathbf U^*}_{|s|,\mathbb R^3\setminus\Gamma}^2\right)^{1/2} \leq&\, C_2\frac{|s|^4}{\sigma^2\underline{\sigma}^4} \left(\|\mathbf F\|_{\Omega_-}^2 + \|\mathbf T^+\mathbf U^{inc}\|_{-1/2}^2 + \|\gamma^+\mathbf U^{inc}\|_{1/2}^2\right)^{1/2}.
\end{align} }
\end{theorem}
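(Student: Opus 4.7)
The plan is to establish a generalized coercivity for the sesquilinear form associated with $\mathcal A(s)$ via a weighted test, from which unique solvability follows through Lax--Milgram (applied to the bilinear form modified by an isomorphic weighting), and then to extract the two stated quantitative bounds by estimating the right-hand side and, for \eqref{eq:LUStabilityBound}, invoking the layer potential continuity bounds of Lemma \ref{lem:ContinuityBounds}.

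For the coercivity step, I would test \eqref{eq:Variational Form2} against the weighted conjugate triple $(\bar s\,\overline{\mathbf U}, s\,\overline{\boldsymbol\Lambda}, \overline{\boldsymbol\varPhi})$ and take the real part. The interior block $\mathrm A_{\Omega_-}(s)$ yields $\sigma \triple{\mathbf U}_{|s|,\Omega_-}^2$ by exactly the computation in Lemma \ref{lem:StronglyEllipticA}. The two coupling blocks $\pm[(\gamma^-)',0]$ and $[\gamma^-,0]^\top$ combine to produce $s\langle\gamma^-\mathbf U,\overline{\boldsymbol\Lambda}\rangle_\Gamma-\bar s\,\overline{\langle\gamma^-\mathbf U,\overline{\boldsymbol\Lambda}\rangle_\Gamma}$, a purely imaginary quantity that vanishes after taking the real part. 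The boundary block, by the scaling choice, rearranges into $\langle\mathrm B_0(s)(\boldsymbol\Lambda,\boldsymbol\varPhi),(\overline{\boldsymbol\Lambda},\overline{\boldsymbol\varPhi})\rangle$ plus a contribution from the $\pm\tfrac12\mathcal I$ pieces of $\mathrm B(s)$ of the form $\tfrac12[\langle\boldsymbol\Lambda,\overline{\boldsymbol\varPhi}\rangle_\Gamma-\langle\boldsymbol\varPhi,\overline{\boldsymbol\Lambda}\rangle_\Gamma]$, which is again purely imaginary. Invoking Lemma \ref{lem:ElliticityB0} then gives
\[
\mathrm{Re}\,\langle \mathcal A(s)(\mathbf U,\boldsymbol\Lambda,\boldsymbol\varPhi),(\bar s\,\overline{\mathbf U}, s\,\overline{\boldsymbol\Lambda}, \overline{\boldsymbol\varPhi})\rangle \gtrsim \sigma\,\triple{\mathbf U}_{|s|,\Omega_-}^2 + \frac{\sigma\underline\sigma^2}{|s|^2}\bigl(\|\boldsymbol\Lambda\|_{-1/2}^2+\|\boldsymbol\varPhi\|_{1/2}^2\bigr).
\]
Since the diagonal weighting $(\bar s,s,1)$ is an isomorphism of the trial/test space, this is the required generalized coercivity and, together with continuity of $\mathcal A(s)$ (from Cauchy--Schwarz and Lemma \ref{lem:NormalTraceBound}), yields unique solvability of \eqref{eq:Variational Form2}.

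For \eqref{eq:LStabilityBound} I would insert the data $(\mathbf d_1,\mathbf d_2,\mathbf d_3)$ into the right-hand side of the coercivity inequality and bound the three pairings using Cauchy--Schwarz, the energy-norm trace inequality $\|\gamma^-\mathbf U\|_{1/2}\lesssim \underline\sigma^{-1}\triple{\mathbf U}_{|s|,\Omega_-}$ coming from \eqref{eq:EnergyEquivalence}, and the elementary bound $\|\mathbf U\|_{\Omega_-}\lesssim |s|^{-1}\triple{\mathbf U}_{|s|,\Omega_-}$ together with Young's inequality to absorb the factors $\triple{\mathbf U}_{|s|,\Omega_-}$ and $\|\boldsymbol\Lambda\|_{-1/2}$ on the left. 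Dividing through by the smaller coefficient $\sigma\underline\sigma^2/|s|^2$ and taking a square root produces the bound, with any remaining slack in the exponent of $\underline\sigma$ absorbed by $\underline\sigma\leq 1$. Finally, for \eqref{eq:LUStabilityBound}, I would use the alternative representation $\mathbf U^* = \mathcal S(s)\boldsymbol\Lambda - s^{-1}\mathcal D(s)\boldsymbol\varPhi$ from \eqref{eq:IntegralRepresentationB} and apply the energy-norm bounds \eqref{eq:SContinuityBoundEnergy} and \eqref{eq:DContinuityBoundEnergy} to estimate
\[
\triple{\mathbf U^*}_{|s|,\mathbb R^3\setminus\Gamma}\;\lesssim\; \frac{|s|}{\sigma\underline\sigma}\|\boldsymbol\Lambda\|_{-1/2} + \frac{|s|^{1/2}}{\sigma\underline\sigma^{1/2}}\|\boldsymbol\varPhi\|_{1/2},
\]
and then substitute \eqref{eq:LStabilityBound}, noting that $|s|/(\sigma\underline\sigma)\geq 1$ so the combined growth collapses into the single factor $|s|^4/(\sigma^2\underline\sigma^4)$.

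The most delicate part is the first paragraph: the precise bookkeeping needed to verify that the anti-symmetric coupling terms and the $\pm\tfrac12\mathcal I$ contributions inside $\mathrm B(s)$ really do survive only as purely imaginary quantities when paired with the weighted conjugate test. Once this reduction to $\mathrm B_0(s)$ is in hand, the rest is Young-inequality bookkeeping and direct application of the layer-potential continuity estimates.
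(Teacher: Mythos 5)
Your proposal follows the same route as the paper: the weighted test with $\mathrm{diag}(\bar s,s,1)$, the observation that the coupling terms and the $\pm\tfrac12\mathcal I$ pieces of $\mathrm B(s)$ are purely imaginary so that the real part collapses to the interior energy term plus $\mathrm{Re}\langle\mathrm B_0(s)(\boldsymbol\Lambda,\boldsymbol\varPhi),\overline{(\boldsymbol\Lambda,\boldsymbol\varPhi)}\rangle$, Lemma \ref{lem:ElliticityB0} for the coercivity, Cauchy--Schwarz with the energy-norm trace bound for \eqref{eq:LStabilityBound}, and the layer-potential continuity estimates of Lemma \ref{lem:ContinuityBounds} applied to $\mathbf U^*=\mathcal S(s)\boldsymbol\Lambda - s^{-1}\mathcal D(s)\boldsymbol\varPhi$ for \eqref{eq:LUStabilityBound}. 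The only cosmetic difference is that you spell out the cancellation of the $\pm\tfrac12\mathcal I$ contributions explicitly, which the paper compresses into a single step.
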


\begin{proof}
We start by computing
\begin{align}
\nonumber
\mathrm{Re}\Bigg[\Bigg\langle\begin{pmatrix}\overline{s} & 0 & 0 \\ 0 & s & 0\\ 0 & 0 & 1\end{pmatrix}&\mathcal A(s)\begin{pmatrix}\mathbf U \\ \boldsymbol \Lambda \\ \boldsymbol\varPhi\end{pmatrix}, \begin{pmatrix}\overline{\mathbf U} \\ \overline{\boldsymbol \Lambda} \\ \overline{\boldsymbol\varPhi}\end{pmatrix}\Bigg\rangle_\Gamma\Bigg] \\
\nonumber
=&\, \mathrm{Re}\left[\overline{s}(\mathrm A_{\Omega_-}(s)\mathbf U,\overline{\mathbf U})_{\Omega_-}-\overline{s} \langle\boldsymbol\Lambda,\gamma^-\overline{\mathbf U}\rangle_\Gamma + s\langle\gamma^-\mathbf U,\overline{\boldsymbol\Lambda}\rangle_\Gamma  + \left\langle\mathrm B(s)(\boldsymbol\Lambda,\boldsymbol\varPhi),\overline{(\boldsymbol\Lambda,\boldsymbol\varPhi)} \right\rangle_\Gamma\right] \\
\nonumber
=&\, \mathrm{Re}\left[\overline{s}(\mathrm A_{\Omega_-}(s)\mathbf U,\overline{\mathbf U})_{\Omega_-} + \left\langle\mathrm B_0(s)(\boldsymbol\Lambda,\boldsymbol\varPhi),\overline{(\boldsymbol\Lambda,\boldsymbol\varPhi)} \right\rangle_\Gamma \right] \\
\nonumber
=&\, \sigma\triple{\mathbf U}_{|s|,\Omega_-}^2 +  \mathrm{Re}\left[\left\langle\mathrm B_0(s)(\boldsymbol\Lambda,\boldsymbol\varPhi),\overline{(\boldsymbol\Lambda,\boldsymbol\varPhi)} \right\rangle_\Gamma \right] \\
\label{eq:aux100}
\geq\,& \sigma\triple{\mathbf U}_{|s|,\Omega_-}^2 + \frac{\sigma\underline{\sigma}^2}{|s|^2}\left(\|\boldsymbol\Lambda\|_{-1/2}^2 + \|\boldsymbol\varPhi\|_{1/2}^2\right)\\
\label{eq:aux101}
\geq\,& \frac{\sigma\underline{\sigma}^2}{|s|^2}\left(\triple{\mathbf U}_{1,\Omega_-}^2 + \|\boldsymbol\Lambda\|_{-1/2}^2 + \|\boldsymbol\varPhi\|_{1/2}^2\right), 
\end{align}
which implies the unique solvability of \eqref{eq:Variational Form2}. 

We now start from the inequality \eqref{eq:aux100} and use the fact that $(\mathbf U,\boldsymbol\Lambda,\boldsymbol\varPhi)$ satisfies \eqref{eq:Variational Form2} to show that
\begin{align*}
\nonumber
\frac{\sigma\underline{\sigma}^2}{|s|^2}\left(\triple{\mathbf U}_{|s|,\Omega_-}^2 + \|\boldsymbol\Lambda\|_{-1/2}^2 + \|\boldsymbol\varPhi\|_{1/2}^2\right) \leq\,& \mathrm{Re}\Bigg[\Bigg\langle\begin{pmatrix}\overline{s} & 0 & 0 \\ 0 & s & 0\\ 0 & 0 & 1\end{pmatrix}\mathcal A(s)\begin{pmatrix}\mathbf U \\ \boldsymbol \Lambda \\ \boldsymbol\varPhi\end{pmatrix}, \begin{pmatrix}\overline{\mathbf U} \\ \overline{\boldsymbol \Lambda} \\ \overline{\boldsymbol\varPhi}\end{pmatrix}\Bigg\rangle_\Gamma\Bigg] \\
\nonumber
=\,& \mathrm{Re} \Bigg[ \Bigg\langle\begin{pmatrix}\overline{s} & 0 & 0 \\ 0 & s & 0\\ 0 & 0 & 1\end{pmatrix}\begin{pmatrix}\mathbf F + (\gamma^-)^\prime \mathbf T^+\mathbf U^{inc} \\ \gamma^+\mathbf U^{inc} \\ \boldsymbol 0 \end{pmatrix}, \begin{pmatrix}\overline{\mathbf U} \\ \overline{\boldsymbol \Lambda} \\ \overline{\boldsymbol\varPhi}\end{pmatrix}\Bigg\rangle_\Gamma\Bigg]\\
\leq\,& |s| |(\mathbf F,\overline{\mathbf U})_{\Omega_-}+\langle\mathbf T^+\mathbf U^{inc},\overline{\boldsymbol\varPhi}\rangle_\Gamma + \langle\overline{\boldsymbol\Lambda} ,\gamma^+\mathbf U^{inc}\rangle_\Gamma|.
\end{align*}
From this inequality and recalling that $(\mathbf F,\overline{\mathbf U})_{\Omega_-} \leq \underline{\sigma}^{-1}\|\mathbf F\|_{\Omega_-}\triple{\mathbf U}_{|s|,\Omega_-}$ we see that 
\eqref{eq:LStabilityBound} follows readily.

To derive the stability bound \eqref{eq:LUStabilityBound} we first note that the mapping properties of the single and double layer potentials in Lemma \ref{lem:ContinuityBounds} imply that there exists $C>0$ such that
\begin{align*}
 \triple{\mathbf U^*}_{|s|,\mathbb R^3\setminus\Gamma} = \|\mathcal S(s)\boldsymbol\Lambda  - s^{-1}\mathcal D(s)\boldsymbol\varPhi\|_{|s|,\mathbb R^3\setminus\Gamma} \leq\,& C \left(\frac{|s|}{\sigma\underline{\sigma}}\|\boldsymbol\Lambda\|_{-1/2} + \frac{|s|^{1/2}}{\sigma\underline{\sigma}^{1/2}}\|\boldsymbol\varPhi\|_{1/2}\right) \\ 
\leq\,& C \frac{|s|}{\sigma\underline{\sigma}}\left(\|\boldsymbol\Lambda\|_{-1/2} + \|\boldsymbol\varPhi\|_{1/2}\right).
\end{align*}

Hence, it follows that
\[
\triple{\mathbf U}_{|s|,\Omega_-} \!\!+\frac{\sigma\underline{\sigma}}{|s|} \triple{\mathbf U^*}_{|s|,\mathbb R^3\setminus\Gamma} \leq \triple{\mathbf U}_{|s|,\Omega_-} \!\!+ \|\boldsymbol\Lambda\|_{-1/2} + \|\boldsymbol\varPhi\|_{1/2}.
\]
Therefore using \eqref{eq:LStabilityBound} we can see that
\begin{align*}
\frac{\sigma\underline{\sigma}}{|s|}\left(\triple{\mathbf U}_{|s|,\Omega_-} \!\!+\triple{\mathbf U^*}_{|s|,\mathbb R^3\setminus\Gamma}\right) \leq\,& \triple{\mathbf U}_{|s|,\Omega_-} \!\!+\frac{\sigma\underline{\sigma}}{|s|} \triple{\mathbf U^*}_{|s|,\mathbb R^3\setminus\Gamma}\\
\leq\,& \triple{\mathbf U}_{|s|,\Omega_-} \!\!+ \|\boldsymbol\Lambda\|_{-1/2} + \|\boldsymbol\varPhi\|_{1/2} \\
\lesssim \,& \frac{|s|^{3}}{\sigma\underline{\sigma}^3} \left(\|\mathbf F\|_{\Omega_-} \!\!+ \|\mathbf T^+\mathbf U^{inc}\|_{-1/2} + \|\gamma^+\mathbf U^{inc}\|_{1/2}\right),
\end{align*}
from which \eqref{eq:LUStabilityBound} follows.
\end{proof}
\subsection{Contrasting the two formulations}\label{sec:Contrasting}
In the previous sections we have introduced two different formulations for problem \eqref{eq:LaplaceSystem} and derived stability estimates for each of them. A simple comparison of the bounds obtained for both approaches leads to the following conclusion: For the volume unknowns, $\mathbf U^-$ and $\mathbf U^*$, the direct approach using the representation \eqref{eq:IntegralRepresentationA} yields a better result in terms of smaller powers of $|s|$ and $\sigma$ as shown in \eqref{eq:est1}, namely
\[
\left(\triple{\mathbf U^-}_{|s|,\Omega_-}^2 + \triple{\mathbf U^*}_{|s|,\mathbb R^3\setminus\Gamma}^2\right)^{1/2} \leq C_1 \frac{|s|^2}{ \sigma\underline{\sigma}^2}\left(\|\mathbf F\|_{\Omega_-}^2\!\! + \|\mathbf T^+  \mathbf U^{inc}\|^2_{-1/2} + \|\gamma^+ \mathbf U^{inc}\|^2_{1/2}\right)^{1/2}.
\]
However, as we shall see in Section \ref{sec:TDexistence}, when one considers the boundary unknowns  $\boldsymbol\Lambda$ and $\boldsymbol\varPhi$, together with the solution on the interior domain $\mathbf U^-$, 
the bound \eqref{eq:LStabilityBound} stemming from the alternative approach using the representation \eqref{eq:IntegralRepresentationB} 
\[
\left(\triple{\mathbf U^-}_{|s|,\Omega_-}^2 + \|\boldsymbol\Lambda\|_{-1/2}^2 + \|\boldsymbol\varPhi\|_{1/2}^2\right)^{1/2} \leq C_1 \frac{|s|^{3}}{\sigma\underline{\sigma}^3} \left(\|\mathbf F\|_{\Omega_-}^2 + \|\mathbf T^+\mathbf U^{inc}\|_{-1/2}^2 + \|\gamma^+\mathbf U^{inc}\|_{1/2}^2\right)^{1/2},
\]
will translate into better estimates on the growth of the solution for longer times. Hence, in order to obtain optimal information from both boundary and volume unknowns, both approaches are valuable.
%
\section{Results in the time domain}\label{sec:TimeDomain}
\subsection{Preliminaries in the time domain}
Thanks to a result originally due Christian Lubich \cite{Lubich:1994} and to a subsequent refinement by Francisco--Javier Sayas \cite{Sayas:2016}, the bounds obtained in the previous section in terms of the Laplace parameter $s$ and its real part $\sigma$ can then be readily transferred to results regarding the growth of the solution in time as well as the regularity requirements for problem data in the time domain. 

Before introducing the aforementioned theorem, some definitions are required:

\begin{itemize}
\item The space of bounded linear operators between the Banach spaces $X$ and $Y$ will be denoted as $\mathcal B(X,Y)$.
\item The space of continuous functions mapping a real parameter to a Banach space $X$ and having  $k$ continuous derivatives will be denoted as $\mathcal C^k(\mathbb R, X)$.
\item For an analytic function $A(s)$ mapping $\mathbb C_+$ to $\mathcal B(X,Y)$, and $\alpha\in [0,1)$, we will say that $A(\cdot)$ belongs to the class $\mathcal{A} (k + \alpha, \mathcal{B}(X, Y))$ if there exists a non-negative integer $k$ such that 
\[
\|A(s)\|_{X,Y} \le C_A\left(\sigma\right) |s|^{k  + \alpha} \quad \mbox{for}\quad s \in \mathbb{C}_+ ,
\]
where $\sigma:=\mathrm{Re} (s)$, and $C_A : (0, \infty) \rightarrow (0, \infty) $ is a non-increasing function such that 
\[
C_A(\sigma) \le \frac{ c}{\sigma^m} , \quad \forall \quad \sigma \in ( 0, 1]
\]
for some $m \ge 0$ and constant $c>0$. 
\item For a Banach space $X$, the set of $X$-valued time domain distributions will be denoted as $\mathrm{TD}(X)$.

\item We will define the distributional convolution of the $\mathcal B(X,Y)$-valued time-domain distribution $a := \mathcal L^{-1}\{A\}\in \mathrm{TD}(\mathcal B(X,Y)) $ and the  $X$-valued \textit{causal} distribution $g: =\mathcal L^{-1}\{G\}: \in \mathrm{TD}(X)$ as
\[
(a*g)(t) : = \mathcal{L}^{-1}\{AG\}.
\]
\end{itemize}
%
\subsection{Existence, uniqueness and time regularity}\label{sec:TDexistence}

We start by defining the spaces
\begin{align*}
\boldsymbol X :=\,& \boldsymbol L^2(\Omega_-)\times\boldsymbol H^{-1/2}(\Gamma)\times\boldsymbol H^{1/2}(\Gamma), \\
\boldsymbol Y_1 :=\,&  \boldsymbol H^1(\Omega_-)\times \boldsymbol H^1(\mathbb R^3\setminus\Gamma), \\
\boldsymbol Y_2 :=\,& \boldsymbol H^1(\Omega_-)\times \boldsymbol H^{-1/2}(\Gamma)\times \boldsymbol H^{1/2}(\Gamma),
\end{align*}
and the solution operators 
\begin{alignat*}{6}
\mathrm S_{\Omega_-,\mathbb R^3\setminus\Gamma}:\,&& \boldsymbol X \qquad\qquad&\longrightarrow&&\; \quad \boldsymbol Y_1\\
&& (\mathbf F,\mathbf T^+\mathbf U^{inc},\gamma^+\mathbf U^{inc}) & \longmapsto &&\; (\mathbf U^-,\mathbf U^*), \\[2mm]
\mathrm S_{\Omega_-,\Gamma}:\,&& \boldsymbol X \qquad\qquad &\longrightarrow&&\; \quad \boldsymbol Y_2\\
&& (\mathbf F,\mathbf T^+\mathbf U^{inc},\gamma^+\mathbf U^{inc}) & \longmapsto &&\; (\mathbf U^-,\boldsymbol\Lambda,\boldsymbol\varPhi),
\end{alignat*}
where $(\mathbf U^-,\mathbf U^*)$ satisfies \eqref{eq:LaplaceSystem} (with $\mathbf U^+$ replaced by -$\mathbf U^*$) and $(\mathbf U^-,\boldsymbol\Lambda,\boldsymbol\varPhi)$ satisfies \eqref{eq:StrongNonLocal2}.

With all the previous definitions in place, we can now state the result that will allow us to extract time domain results from the Laplace domain results that we have obtained so far. The reader is referred to \cite{Sayas:2016errata}  and \cite[Proposition 3.2.2]{Sayas:2016} for the proof of this result and for further details about Banach space valued time domain and causal distributions.

\begin{theorem}\label{pr:5.1}
Let $A = \mathcal{L}\{a\} \in \mathcal{A} (k + \alpha, \mathcal{B}(X, Y))$ with $\alpha\in [0, 1)$ and $k$ a non-negative integer.  If $ g \in \mathcal{C}^{k+1}(\mathbb{R}, X)$ is causal and its derivative $g^{(k+2)}$ is integrable, then $a* g \in \mathcal{C}(\mathbb{R}, Y)$ is causal and 
\[
\| (a*g)(t) \|_Y \le 2^{\alpha} C_{\epsilon} (t) C_A (t^{-1}) \int_0^t \|(\mathcal{P}_2g^{(k)})(\tau) \|_X \; d\tau,
\]
where 
\[C_{\epsilon} (t) := \frac{1}{2\sqrt{\pi}} \frac{\Gamma(\epsilon/2)}{\Gamma\left( (\epsilon+1)/2 \right) } \frac{t^{\epsilon}}{(1+ t)^{\epsilon}}, \qquad (\epsilon :=  1- \alpha)
\]
the function $\Gamma(\cdot)$ is the Gamma function, and 
\[
(\mathcal{P}_2g) (t) =  g + 2\dot{g} + \ddot{g}.
\]
\end{theorem}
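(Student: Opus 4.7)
The main difficulty is that the symbol $A(s)$ has polynomial growth $|s|^{k+\alpha}$, so its inverse Laplace transform $a$ is only a distribution and cannot be integrated directly against $g$. My strategy is to redistribute the growth between the two factors. Concretely, I would introduce a regularized symbol $\tilde A(s):= A(s)/\bigl(s^k(1+s)^2\bigr)$. Because $g\in\mathcal C^{k+1}(\mathbb R, X)$ is causal and $g^{(k+2)}$ is integrable, one has $g^{(j)}(0)=0$ for $j\le k+1$, so integration by parts gives $\mathcal L\{g^{(j)}\}=s^j G(s)$ for $j\le k+2$, and hence
\[
\mathcal L\{\mathcal P_2 g^{(k)}\}(s)=(1+s)^2 s^k G(s).
\]
Multiplying and dividing then yields the algebraic identity $A(s)G(s)=\tilde A(s)\,\mathcal L\{\mathcal P_2 g^{(k)}\}(s)$, which after inverse transformation reads $a*g=\tilde a*(\mathcal P_2 g^{(k)})$.

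The regularized symbol satisfies $\|\tilde A(s)\|_{X,Y}\le C_A(\sigma)\,|s|^\alpha/|1+s|^2$, decaying as $|s|^{\alpha-2}$ with $\alpha<1$. This integrability along vertical lines in $\mathbb C_+$ is what makes the Bromwich representation
\[
\tilde a(t)=\frac{1}{2\pi i}\int_{\sigma-i\infty}^{\sigma+i\infty}e^{st}\tilde A(s)\,ds
\]
converge in $\mathcal B(X,Y)$ for every $t>0$ and every $\sigma>0$, independently of $\sigma$ by a Cauchy-theorem argument relying on analyticity of $A$ on $\mathbb C_+$ and on controlling its growth on horizontal segments as $|\mathrm{Im}\,s|\to\infty$. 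Taking norms inside the integral gives
\[
\|\tilde a(t)\|_{X,Y}\le \frac{e^{\sigma t}\,C_A(\sigma)}{2\pi}\int_{-\infty}^{\infty}\frac{(\sigma^2+\tau^2)^{\alpha/2}}{(1+\sigma)^2+\tau^2}\,d\tau.
\]
Splitting the numerator using $(\sigma^2+\tau^2)^{\alpha/2}\le 2^{\alpha/2}\bigl(\sigma^\alpha+|\tau|^\alpha\bigr)$ and changing variables $\tau=(1+\sigma)u$ reduces the integral to an explicit Beta-function computation that produces the prefactor $\Gamma(\epsilon/2)/\Gamma((\epsilon+1)/2)$ with $\epsilon=1-\alpha$. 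Choosing $\sigma=1/t$ (or more precisely $\sigma=\min\{1,1/t\}$, to stay in the region where $C_A$ is controlled) balances the $e^{\sigma t}$ factor against the growth of $C_A(\sigma)$ and yields the sharp pointwise bound
\[
\sup_{0\le\tau\le t}\|\tilde a(\tau)\|_{X,Y}\le 2^\alpha\, C_\epsilon(t)\, C_A(t^{-1}).
\]

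The final step is Young's inequality for the convolution $\tilde a*(\mathcal P_2 g^{(k)})$: causality of both factors restricts the convolution to $[0,t]$, so
\[
\|(a*g)(t)\|_Y\le \Bigl(\sup_{0\le\tau\le t}\|\tilde a(\tau)\|_{X,Y}\Bigr)\int_0^t \|(\mathcal P_2 g^{(k)})(\tau)\|_X\,d\tau,
\]
and the stated estimate follows. Continuity and causality of $a*g$ are inherited from those of $\tilde a$ together with the integrability of $\mathcal P_2 g^{(k)}$ through standard properties of convolutions.

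The main obstacle is the sharp determination of the constants, in particular identifying the precise Beta/Gamma-function combination that produces $C_\epsilon(t)$ after optimizing the contour, as well as verifying the legitimacy of the contour shifts between different values of $\sigma>0$. Both hinge on a careful, though essentially routine, bookkeeping of the growth of $A$ on $\mathbb C_+$ and on classical identities for the Gamma function; the rest of the argument is a direct consequence of the regularization identity $a*g=\tilde a*(\mathcal P_2 g^{(k)})$ together with the Bromwich representation.
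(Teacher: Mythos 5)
The paper does not actually prove this statement; it is quoted from the literature, and the reader is referred to \cite{Sayas:2016errata} and \cite[Proposition~3.2.2]{Sayas:2016} for the argument. Your proposal reproduces, in outline, precisely that Lubich--Sayas argument: shift powers of $s$ off the symbol onto the data via the regularization $\tilde A(s)=A(s)/\bigl(s^{k}(1+s)^{2}\bigr)$, use causality and the $\mathcal C^{k+1}$ regularity of $g$ to justify $\mathcal L\{\mathcal P_2 g^{(k)}\}=(1+s)^2 s^k G$, represent $\tilde a$ by an absolutely convergent Bromwich integral, optimize the abscissa at $\sigma=t^{-1}$, and finish with the elementary $L^\infty$--$L^1$ convolution bound. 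So the route is the same; there is no genuinely different idea here to contrast.

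One small imprecision is worth flagging in the constant computation. The splitting $(\sigma^2+\tau^2)^{\alpha/2}\le \sigma^\alpha+|\tau|^\alpha$ (the extra $2^{\alpha/2}$ is unnecessary since $\alpha/2<1$) produces two integrals, the $|\tau|^\alpha$ one evaluating to $\pi/\cos(\pi\alpha/2)$ rather than the stated Gamma ratio, so this path does not land cleanly on $\Gamma(\epsilon/2)/\Gamma\bigl((\epsilon+1)/2\bigr)$. The cleaner route, and the one that gives the displayed constant directly, is to use $|s|\le|1+s|$ on $\mathbb C_+$ (since $|1+s|^2=|s|^2+1+2\sigma>|s|^2$) to bound $\|\tilde A(s)\|\le C_A(\sigma)\,|1+s|^{\alpha-2}$, so that after the substitution $\tau=(1+\sigma)u$ the remaining integral is
\[
\int_{-\infty}^{\infty}\frac{du}{(1+u^2)^{(1+\epsilon)/2}}=\sqrt{\pi}\,\frac{\Gamma(\epsilon/2)}{\Gamma\bigl((\epsilon+1)/2\bigr)},
\]
and the choice $\sigma=t^{-1}$ then yields $e^{\sigma t}=e$ and $(1+\sigma)^{-\epsilon}=t^\epsilon/(1+t)^\epsilon$, i.e.\ $C_\epsilon(t)$ up to an absolute factor. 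Whether that factor is $e$ or can be sharpened to $2^\alpha$ is exactly the bookkeeping issue addressed in the cited errata; since you explicitly deferred the sharp constant, this does not affect the soundness of your argument, but your displayed splitting should be replaced by the $|s|\le|1+s|$ estimate to actually reach the stated form.
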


  The Laplace-domain existence and uniqueness results established in Section \ref{sec:LaplaceDomainResults} combined with the invertibility of the Laplace transform imply the existence and uniqueness of the solutions to the time-domain problem. Moreover, the stability bounds obtained for the solutions to \eqref{eq:LaplaceSystem} (keeping in mind the observation made in  \ref{sec:Contrasting}) allow us then to conclude that the solution operators are such that
\begin{align*}
\mathrm S_{\Omega_-,\mathbb R^3\setminus\Gamma} \in\,& \mathcal A(2,\mathcal B(\boldsymbol X,\boldsymbol Y_1)) \quad \text{ with }\quad \alpha =0 \quad \text{ and }\quad C_{\mathcal A}(\sigma) \propto \sigma^{-1}\underline{\sigma}^{-2-1}, \\
\mathrm S_{\Omega_-,\Gamma} \in\,& \mathcal A(3,\mathcal B(\boldsymbol X,\boldsymbol Y_2)) \quad \text{ with }\quad \alpha =0 \quad \text{ and } \quad C_{\mathcal A}(\sigma) \propto \sigma^{-1}\underline{\sigma}^{-3-1}.
\end{align*}

As a consequence of Theorem \ref{pr:5.1}, these observations constitute the proof of the following result regarding the time regularity required from problem data, the time-regularity of the volume and boundary unknowns and their growth in time.

\begin{theorem}\phantom{w}

\begin{enumerate}
\item If $g:=(\mathbf f,\mathbf T^+\mathbf u^{inc},\gamma^+\mathbf u^{inc}) \in \mathcal C^3(\mathbb R,\boldsymbol X)$ is causal and the time derivative $g^{(4)}$ is integrable then $(\mathbf u^-,\mathbf u^*)\in\mathcal C(\mathbb R,\boldsymbol Y_1)$ is causal and
\[
\|\mathbf u^-(t)\|_{\Omega_-} + \|\mathbf u^*(t)\|_{\mathbb R^3\setminus\Gamma} \lesssim \frac{t^2\max\{1,t^3\}}{1+t}\int_0^t \|(\mathcal{P}_2g^{(2)})(\tau) \|_{\boldsymbol X} \; d\tau.
\] 
\item If $g:=(\mathbf f,\mathbf T^+\mathbf u^{inc},\gamma^+\mathbf u^{inc}) \in \mathcal C^4(\mathbb R,\boldsymbol X)$ is causal and the time derivative $g^{(5)}$ is integrable then $(\mathbf u^-,\boldsymbol\Lambda,\boldsymbol\varPhi)\in\mathcal C(\mathbb R,\boldsymbol Y_2)$ is causal and
\[
\|\mathbf u^-(t)\|_{\Omega_-} + \|\boldsymbol\lambda(t)\|_{-1/2} + \|\boldsymbol\varphi(t)\|_{1/2} \lesssim \frac{t^2\max\{1,t^4\}}{1+t}\int_0^t \|(\mathcal{P}_2g^{(3)})(\tau) \|_{\boldsymbol X} \; d\tau.
\]
\end{enumerate}
\end{theorem}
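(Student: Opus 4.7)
The proof would proceed as a direct application of Theorem \ref{pr:5.1} to the two solution operators $\mathrm{S}_{\Omega_-,\mathbb{R}^3\setminus\Gamma}$ and $\mathrm{S}_{\Omega_-,\Gamma}$, after first verifying the membership of each in the appropriate symbol class $\mathcal{A}(k+\alpha,\mathcal{B}(\boldsymbol{X},\boldsymbol{Y}_i))$.

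First I would collect the Laplace-domain estimates \eqref{eq:est1} and \eqref{eq:LStabilityBound} and read off the powers of $|s|$, $\sigma$, and $\underline{\sigma}$ appearing on the right-hand sides. For $\mathrm{S}_{\Omega_-,\mathbb{R}^3\setminus\Gamma}$, estimate \eqref{eq:est1} gives a prefactor $|s|^{2}/(\sigma\underline{\sigma}^{2})$, which is of the form $C_A(\sigma)|s|^{k+\alpha}$ with $k=2$, $\alpha=0$, and $C_A(\sigma) = \sigma^{-1}\underline{\sigma}^{-2}$. Since $C_A$ is non-increasing and bounded by $c\sigma^{-3}$ on $(0,1]$, this confirms $\mathrm{S}_{\Omega_-,\mathbb{R}^3\setminus\Gamma}\in\mathcal{A}(2,\mathcal{B}(\boldsymbol{X},\boldsymbol{Y}_1))$. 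An entirely analogous reading of \eqref{eq:LStabilityBound} gives $\mathrm{S}_{\Omega_-,\Gamma}\in\mathcal{A}(3,\mathcal{B}(\boldsymbol{X},\boldsymbol{Y}_2))$ with $C_A(\sigma)=\sigma^{-1}\underline{\sigma}^{-3}$.

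Next I would apply Theorem \ref{pr:5.1} termwise. For part (1), with $k=2$, $\alpha=0$, the hypothesis requires $g\in\mathcal{C}^{3}(\mathbb{R},\boldsymbol{X})$ causal with $g^{(4)}$ integrable, matching the statement; the conclusion then controls $(\mathbf{u}^-,\mathbf{u}^*)$ by $2^{\alpha}C_{\epsilon}(t)C_A(t^{-1})\int_{0}^{t}\|(\mathcal{P}_2 g^{(2)})(\tau)\|_{\boldsymbol{X}}\,d\tau$. With $\epsilon=1-\alpha=1$ one has $C_{\epsilon}(t)\lesssim t/(1+t)$, while the piecewise evaluation of $C_A(t^{-1})=t^{-1}\cdot\underline{(t^{-1})}^{-2}$ on $\{t\leq 1\}$ and $\{t\geq 1\}$ yields $C_A(t^{-1})\lesssim t\max\{1,t^{3}\}$. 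Multiplying the two factors produces exactly the prefactor $t^{2}\max\{1,t^{3}\}/(1+t)$ claimed in the theorem. Part (2) goes through identically, with $k=3$, requiring $g\in\mathcal{C}^{4}(\mathbb{R},\boldsymbol{X})$ with $g^{(5)}$ integrable, and the analogous computation yields $C_A(t^{-1})\lesssim t\max\{1,t^{4}\}$ and hence the prefactor $t^{2}\max\{1,t^{4}\}/(1+t)$.

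Finally I would invoke the causal inverse Laplace transform to identify $(\mathbf{u}^-,\mathbf{u}^*)=\mathcal{L}^{-1}\{\mathrm{S}_{\Omega_-,\mathbb{R}^3\setminus\Gamma}(\mathbf{F},\mathbf{T}^+\mathbf{U}^{inc},\gamma^+\mathbf{U}^{inc})\}$ (with the sign convention $\mathbf{u}^*=-\mathbf{u}^+$ noted before the statement) and likewise for $(\mathbf{u}^-,\boldsymbol{\lambda},\boldsymbol{\varphi})$. Uniqueness in the time domain is inherited from the Laplace-domain uniqueness theorems of Section \ref{sec:LaplaceDomainResults} together with injectivity of the distributional Laplace transform on the class of tempered, causal, Hilbert-space valued distributions. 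The only real subtlety is the case analysis in evaluating $C_A(t^{-1})$, which is mildly tedious but not conceptually hard; there is no further analytic obstacle beyond bookkeeping, because all the delicate Sobolev-space and ellipticity work has already been done in Section \ref{sec:LaplaceDomainResults}.
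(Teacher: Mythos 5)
Your overall strategy is exactly the paper's: read the Laplace-domain stability bounds as operator-norm bounds for $\mathrm S_{\Omega_-,\mathbb R^3\setminus\Gamma}$ and $\mathrm S_{\Omega_-,\Gamma}$, classify these operators in $\mathcal A(k+\alpha,\mathcal B(\boldsymbol X,\boldsymbol Y_i))$, and invoke Theorem \ref{pr:5.1}. However, there is a genuine slip in the step where you read off $C_A(\sigma)$, and it propagates into an internally inconsistent computation.

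The estimates \eqref{eq:est1} and \eqref{eq:LStabilityBound} control the solution in the \emph{energy} norm $\triple{\cdot}_{|s|}$, not in the norm of $\boldsymbol Y_1$ or $\boldsymbol Y_2$, which are built from $\boldsymbol H^1$ and the trace spaces. Membership in $\mathcal A(k+\alpha,\mathcal B(\boldsymbol X,\boldsymbol Y_i))$ requires a bound on the $\mathcal B(\boldsymbol X,\boldsymbol Y_i)$ operator norm, so you must first convert via the norm equivalence \eqref{eq:EnergyEquivalence}, $\underline{\sigma}\triple{\mathbf U}_{1,\mathcal O}\leq\triple{\mathbf U}_{|s|,\mathcal O}$, which costs an extra factor $\underline{\sigma}^{-1}$. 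Thus \eqref{eq:est1} actually yields $C_A(\sigma)\propto\sigma^{-1}\underline{\sigma}^{-3}$ (not $\sigma^{-1}\underline{\sigma}^{-2}$ as you wrote), and \eqref{eq:LStabilityBound} yields $C_A(\sigma)\propto\sigma^{-1}\underline{\sigma}^{-4}$ (not $\sigma^{-1}\underline{\sigma}^{-3}$). This is precisely the origin of the paper's exponents $\underline{\sigma}^{-2-1}$ and $\underline{\sigma}^{-3-1}$. Your subsequent evaluation of $C_A(t^{-1})$ compounds the problem: with the $C_A$ you claimed, $C_A(t^{-1})=t\,\underline{(t^{-1})}^{-2}=t\max\{1,t^2\}$, which would give a prefactor $t^2\max\{1,t^2\}/(1+t)$, strictly smaller than the stated $t^2\max\{1,t^3\}/(1+t)$. (The expression $t^{-1}\underline{(t^{-1})}^{-2}$ you wrote is also not the correct substitution for $\sigma^{-1}\underline{\sigma}^{-2}$ at $\sigma=t^{-1}$, and is not $\lesssim t\max\{1,t^3\}$ since it diverges as $t\to 0$.) Once the $\underline{\sigma}^{-1}$ conversion factor is inserted, $C_A(t^{-1})=t\,\underline{(t^{-1})}^{-3}=t\max\{1,t^3\}$ for part (1) and $t\max\{1,t^4\}$ for part (2), and the stated time-growth prefactors follow cleanly. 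So the architecture of your argument is right, but you need the norm-equivalence step to land the exponents, and your piecewise evaluation needs to be done consistently with the corrected $C_A$.
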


\textbf{A remark on time semidiscretization:} Beyond the information that this result provides about the continuous problem, the theorem also provides a glimpse into the behavior of the discretization error of any Convolution Quadrature based time semidiscretization. In that context, the linearity of the problem implies that the discretization errors satisfy a system with the same structure as the unknowns themselves. The error equations can then be analyzed analogously to what we have done here and a results much like the theorem above can be proven. In that case, the quantities on the left hand sides of the estimates above would correspond to the time-discretization error of each of the variables involved, while the integral on the right hand side is proportional to the accuracy of the time-stepping method used for Convolution Quadrature. The factor involving the time growth would remain identical, thus providing an upper bound to the growth of the time discretization error. The detailed analysis of the fully discretized problem will be the subject of a separate communication.

\section*{Acknowledgements}
Tonatiuh S\'anchez-Vizuet was partially fundeed by the United States National Science Foundation through the grant NSF-DMS-2137305.

\noindent A personal note from George C. Hsiao: \textit{In memory of my sister Laura}.

\newpage
{\small
\bibliographystyle{abbrv} 
\bibliography{references.bib,ghsiao.bib}
}
\end{document}